\newtheorem{theorem}{Theorem}
\newtheorem{lemma}{Lemma}
\newtheorem{proposition}{Proposition}
\newtheorem{definition}{Definition}
\newtheorem{algorithm}{Algorithm}
\newtheorem{problem}{Problem}
\newtheorem{example}{Example}
\newcommand{\eps}{\varepsilon}
\newcommand{\supC}{\mbox{$\sup {\rm C}$}}
\newcommand{\infCO}{\mbox{$\inf {\rm C}$}}
\newcommand{\supcn}{\mbox{$\sup {\rm C}$}}
\newcommand{\supccnr}{\mbox{$\sup {\rm rcC}$}}
\newcommand{\supcnr}{\mbox{$\sup {\rm rC}$}}
\newcommand{\supCr}{\mbox{$\sup {\rm rC}$}}
\newenvironment{proofof}[1]{\medskip\noindent \textbf{#1.} }{}
\title{\LARGE \bf
  On a Distributed Computation of Supervisors\\ in Modular Supervisory Control
}
\author{Jan~Komenda, Tom{\' a}{\v s}~Masopust and J.~H.~van~Schuppen%
  \thanks{The authors are with Institute of Mathematics, Academy of Sciences of the Czech Republic, {\v Z}i{\v z}kova 22, 616 62 Brno, Czech Republic, Technische Universit\"at Dresden, Germany, and Van Schuppen Control Research, Gouden Leeuw 143, 1103 KB Amsterdam, The Netherlands, resp.
  {\tt\small komenda{@}ipm.cz, tomas.masopust{@}tu-dresden.de, jan.h.van.schuppen@xs4all.nl}
  }%
}
\begin{document}

\maketitle
\thispagestyle{empty}
\pagestyle{empty}

\begin{abstract}
  In this paper, we discuss a supervisory control problem of modular discrete-event systems that allows for a distributed computation of supervisors. We provide a characterization and an algorithm to compute the supervisors. If the specification does not satisfy the properties, we make use of a relaxation of coordination control to compute a sublanguage of the specification for which the supervisors can be computed in a distributed way.
\end{abstract}

\section{Introduction and motivation}
  We investigate distributed supervisory control of concurrent discrete-event systems. Supervisory control theory of discrete-event systems modeled as finite automata was introduced by Ramadge and Wonham~\cite{RW87} and studied by many others. It aims to guarantee that the control specifications consisting of safety and of nonblockingness are satisfied in the controlled (closed-loop) system. Safety means that the language of the closed-loop system is included in a prescribed specification, and nonblockingness means that all controlled behaviors can always be completed to a marked controlled behavior. Supervisory control is realized by a supervisor that runs in parallel with the system and imposes the specification by disabling, at each state, some of the controllable events in a feedback manner. Since only controllable specifications can be achieved, one of the key issues is the computation of the supremal controllable sublanguage of the specification, from which the supervisor can be constructed.
  
  Supervisory control theory is well developed for monolithic systems, i.e., systems where the plant is modeled as a single generator. However, most of the current complex engineering systems can be abstracted as a composition of many components. Systems that model technological systems typically consist of small generators that communicate with each other in a synchronous~\cite{CL08} or asynchronous~\cite{LauriePhilippe} way. Such systems are often called modular (or concurrent or distributed) discrete-event systems. It is known that to compute the overall monolithic plant for such a system can be unrealistic because the number of states of a modular system grows exponentially with respect to the number of components. This limits the applicability of the monolithic supervisory control synthesis to relatively small systems. On the other hand, the purely decentralized control consisting of an independent construction of a supervisor for each subsystem is only applicable for a local (decomposable) specification. 
  
  Specifically, let $G_1$ and $G_2$ be two systems over the respective alphabets $\Sigma_1$ and $\Sigma_2$ modeled as finite generators forming the overall plant $G_1\| G_2$, the computation of which we want to avoid, and let $K\subseteq L_m(G_1\| G_2)$ denote a specification. There exists a monolithic supervisor for the monolithic plant $G_1\| G_2$ if and only if the specification is controllable with respect to the plant. However, to avoid the computation of the monolithic plant $G_1\| G_2$, the naive approach to compute supervisors for each subsystem separately does not work in general. To demonstrate this, assume that we compute supervisors $S_i$ such that $L_m(S_i/G_i) = P_i(K)$, where $P_i(K)$ denotes the projection of the specification $K$ to the alphabet of the generator $G_i$. Then, it can still be the case that $K \subsetneq P_1(K) \parallel P_2(K)$. The problem here is that the specification is not necessarily decomposable. Recall that a languages $L$ is decomposable (or separable) with respect to alphabets $\Sigma_1$ and $\Sigma_2$ if $L = P_1(L) \parallel P_2(L)$, where $P_i:(\Sigma_1\cup\Sigma_2)^* \to \Sigma_i^*$ is a natural projection (see more details below). If a language is not decomposable, it would be natural to search for some ``maximal'' decomposable sublanguage. Unfortunately, this is not algorithmically possible~\cite{LinSSWS14}. 

  To deal with this problem, we introduced the notion of {\em conditional decomposability}. Conditional decomposability of a language $L$ with respect to alphabets $\Sigma_1$ and $\Sigma_2$ requires to find another alphabet $\Sigma_k$ such that the language $L$ is decomposable with respect to $\Sigma_1\cup\Sigma_k$ and $\Sigma_2\cup\Sigma_k$, i.e., $L=P_{1+k}(L) \parallel P_{2+k}(L)$, see more details below. The difference between decomposability and conditional decomposability is that every language can be made conditionally decomposable, which overcomes the undecidable problem of finding a nonempty decomposable sublanguage discussed above. In other words, we can always find an alphabet $\Sigma_k$ such that the language becomes conditionally decomposable. Indeed, one could take $\Sigma_k = \Sigma_1 \cup \Sigma_2$, but the aim is to find a reasonably small such alphabet. Although to find a minimal alphabet is NP-hard~\cite{JDEDS}, a polynomial-time algorithm to find an acceptable alphabet is described in~\cite{scl12}. 
  
  The observation that conditional decomposability is always possible leads us to the formulation of the following problem that we investigate in this paper. 
  \begin{problem}\label{problem1}
    Let $G_1$ and $G_2$ be generators over the alphabets $\Sigma_1$ and $\Sigma_2$, respectively. Assume that a specification $K \subseteq L_m(G_1 \| G_2)$ and its prefix-closure $\overline{K}$ are conditionally decomposable with respect to $\Sigma_1$, $\Sigma_2$, and $\Sigma_k$, for some alphabet $\Sigma_k$ such that $\Sigma_1 \cap \Sigma_2 \subseteq \Sigma_k \subseteq \Sigma_1\cup\Sigma_2$. Let $G_k = P_k(G_1) \parallel P_k(G_2)$ be a coordinator that ensures the necessary communication between the systems $G_1$ and $G_2$. The aim is to determine nonblocking supervisors $S_{1}$ and $S_{2}$ such that the supervised distributed system satisfies
    \begin{align*}
      L_m(S_{1}/ [G_1 \| G_k ]) \parallel L_m(S_{2}/ [G_2 \| G_k ]) = K
      && \text{ and } &&
      L(S_{1}/ [G_1 \| G_k ]) \parallel L(S_{2}/ [G_2 \| G_k ]) = \overline{K}\,,
    \end{align*}
    i.e., it fulfills the specification and the supervisors are nonconflicting. \QEDopen
  \end{problem}
  
  In other words, the question is whether it is possible to distribute the monolithic (global) supervisor with help of a coordinator in such a way that there exist nonblocking and nonconflicting supervisors $S_1$ and $S_2$ such that the supervised distributed closed-loop system satisfies safety and nonblockingness.
  
  Note that we restrict ourselves to the case, where the coordinator $G_k$ does not affect the behavior of the whole system. The case where $G_k$ can influence the system, i.e., the new plant is $G_1\| G_2\| G_k$, is left for the future investigation. However, it should be pointed out that one has to be careful with this modification, since the coordinator could influence the system in such a way that it forbids some uncontrollable events, which may be physically impossible. A possible solution would be to consider only such coordinators $G_k$ that are controllable with respect to (some projection of) $G_1\| G_2$. 

  In this paper, we investigate Problem~\ref{problem1}. We provide a necessary and sufficient condition for the existence of a solution, which results in a decision procedure and in an algorithm to compute the solution, if it exists. As usual, if the solution does not exist, we make use of the coordination control methods to construct an acceptable sub-specification for which a solution exists, measured with respect to the solution provided in the coordination control framework. Moreover, we relax the coordination control framework so that the main results of the framework still hold, but the presentation is simplified and potentially larger solutions can be achieved. However, we do not claim that the use of the coordination control framework is the best here, we only show that it is possible to use it here. It is an interesting research topic whether other methods could be used to obtain a better solution. Related work that could be useful to achieve this includes, but is not limited to, \cite{FLT,hubbard:caines:2002,KS,WZ91}.
  
  The paper is organized as follows. In Section~\ref{sec:preliminaries}, the necessary notation is introduced and the basic elements of supervisory control theory are recalled. In Section~\ref{sec:existenceOFsolution}, we provide necessary and sufficient conditions under which the solution of the problem exists. In Section~\ref{sec:relaxation}, we recall the basic coordination control framework and introduce its relaxed variant. In Section~\ref{sec:comparison}, we compare Problem~\ref{problem1} with the (relaxed) coordination control problem. In Section~\ref{sec:app}, we make use of the relaxed coordination control to find a reasonable sub-specification for which a solution exists if a solution for the specification does not exist. An industrial example is provided in Section~\ref{sec:MRI}.
  
  We discuss only the full-observation case here. The approach can be extended to partial observations using either the notion of (conditional) normality of~\cite{relaxed} or the notion of (conditional) relative observability of~\cite{caiCDC13,KomendaMS14a}.

\section{Preliminaries and definitions}\label{sec:preliminaries}
  Let $\Sigma$ denote a finite nonempty set of events (an {\em alphabet}), and let $\Sigma^*$ denote the set of all finite words over $\Sigma$. The empty word is denoted by $\eps$. A {\em language\/} over $\Sigma$ is a subset of $\Sigma^*$. The prefix closure of a language $L$ over $\Sigma$ is the set $\overline{L}=\{w\in \Sigma^* \mid \text{there exists } u \in\Sigma^* \text{ such that } wu\in L\}$. A language $L$ is {\em prefix-closed\/} if $L=\overline{L}$. For more details, the reader is referred to~\cite{CL08,Won12}. 
  
  A {\em generator\/} is a structure $G=(Q,\Sigma, f, q_0, Q_m)$, where $Q$ is the finite set of states, $\Sigma$ is the finite nonempty set of events, $f: Q \times \Sigma \to Q$ is the partial transition function, $q_0 \in Q$ is the initial state, and $Q_m\subseteq Q$ is the set of marked states. The transition function $f$ can be extended to the domain $Q \times \Sigma^*$ in the usual way. The behavior of $G$ is described in terms of languages. The language {\em generated\/} by $G$ is the set $L(G) = \{s\in \Sigma^* \mid f(q_0,s)\in Q\}$, and the language {\em marked\/} by $G$ is the set $L_m(G) = \{s\in \Sigma^* \mid f(q_0,s)\in Q_m\}$. Obviously, $L_m(G)\subseteq L(G)$.

  For $\Sigma_0\subseteq \Sigma$, a {\em (natural) projection\/} is a mapping $P: \Sigma^* \to \Sigma_0^*$, which deletes from any word all letters that belong to $\Sigma\setminus \Sigma_0$. Formally, it is a homomorphism defined by $P(a)=\eps$, for $a$ in $\Sigma\setminus \Sigma_0$, and $P(a)=a$, for $a$ in $\Sigma_0$. It is extended (as a homomorphism for concatenation) from letters to words by induction. The {\em inverse image\/} of $P$ is denoted by $P^{-1}:2^{\Sigma_0^*} \to 2^{\Sigma^*}$. For three alphabets $\Sigma_i$, $\Sigma_j$, $\Sigma_\ell$, subsets of $\Sigma$, the notation $P^{i+j}_{\ell}$ denotes the projection from $(\Sigma_i\cup \Sigma_j)^*$ to $\Sigma_\ell^*$. If $\Sigma_i\cup \Sigma_j=\Sigma$, we simplify the notation to $P_\ell$. Similarly, the notation $P_{i+k}$ stands for the projection from $\Sigma^*$ to $(\Sigma_i\cup\Sigma_k)^*$. The projection of a generator $G$, denoted by $P(G)$, is a generator whose behavior satisfies $L(P(G))=P(L(G))$ and $L_m(P(G))=P(L_m(G))$. It is defined using the standard subset construction, cf.~\cite{CL08}.
 
  The synchronous product of languages $L_1$ over $\Sigma_1$ and $L_2$ over $\Sigma_2$ is the language $L_1\parallel L_2=P_1^{-1}(L_1) \cap P_2^{-1}(L_2)$, where $P_i: (\Sigma_1\cup \Sigma_2)^*\to \Sigma_i^*$ is a projection, $i=1,2$. A definition for generators can be found in~\cite{CL08}. For generators $G_1$ and $G_2$, $L(G_1 \| G_2) = L(G_1) \parallel L(G_2)$ and $L_m(G_1 \| G_2)= L_m(G_1) \parallel L_m(G_2)$. 
  Languages $K$ and $L$ are {\em synchronously nonconflicting\/} if $\overline{K \parallel L} = \overline{K} \parallel \overline{L}$. 
  
  We recall the basic elements of supervisory control theory. A {\em controlled generator\/} over an alphabet $\Sigma$ is a structure $(G,\Sigma_c,\Gamma)$, where $G$ is a generator over the alphabet $\Sigma$, $\Sigma_c\subseteq\Sigma$ is the set of {\em controllable events}, $\Sigma_{u} = \Sigma \setminus \Sigma_c$ is the set of {\em uncontrollable events}, and $\Gamma = \{\gamma \subseteq \Sigma \mid \Sigma_{u} \subseteq \gamma\}$ is the {\em set of control patterns}. A {\em supervisor\/} for the controlled generator $(G,\Sigma_c,\Gamma)$ is a map $S:L(G) \to \Gamma$. The {\em closed-loop system\/} associated with the controlled generator $(G,\Sigma_c,\Gamma)$ and the supervisor $S$ is defined as the smallest language $L(S/G)$ such that $\eps \in L(S/G)$, and if $s \in L(S/G)$, $sa\in L(G)$, and $a \in S(s)$, then $sa \in L(S/G)$. The marked language of the closed-loop system is defined as $L_m(S/G) = L(S/G)\cap K$, where $K\subseteq L_m(G)$ is a specification. The intuition is that the supervisor disables some of the controllable transitions of $G$, but never an uncontrollable transition, and marks in accordance with the specification. If $\overline{L_m(S/G)}=L(S/G)$, then the supervisor $S$ is called {\em nonblocking}. 
  
  In the automata framework, where a supervisor $S$ has a finite representation as a generator, basically it is the generator $G$ with some of the controllable events disabled, the closed-loop system is a synchronous product of the supervisor and the plant. Thus, we can write the closed-loop system as $L(S/G)=L(S) \parallel L(G) = L(S)$. Moreover, the supervisor keeps the information about the marked states, i.e., we have that $L_m(S/G)=L_m(S) \parallel L_m(G) = L_m(S)$. The supervisor is then nonblocking if $\overline{L_m(S)} = L(S)$.

  Control objectives of supervisory control are defined using a specification $K$. The goal of supervisory control is to find a nonblocking supervisor $S$ such that $L_m(S/G)=K$. In the monolithic case, such a supervisor exists if and only if $K$ is {\em controllable\/} with respect to $L(G)$ and $\Sigma_u$ (i.e., $\overline{K}\Sigma_u\cap L\subseteq \overline{K}$), cf.~\cite{CL08,Won12}.

  If the specification fails to satisfy controllability, controllable sublanguages of the specification are considered instead. Let $\supC(K,L(G),\Sigma_u)$ denote the supremal controllable sublanguage of $K$ with respect to $L(G)$ and $\Sigma_u$, which always exists and is equal to the union of all controllable sublanguages of $K$, cf.~\cite{Won12}. 

  The projection $P:\Sigma^* \to \Sigma_0^*$, with $\Sigma_0\subseteq \Sigma$, is an {\em $L$-observer\/} for $L\subseteq \Sigma^*$ if, for all $t\in P(L)$ and $s\in \overline{L}$, $P(s)$ is a prefix of $t$ implies that there exists $u\in \Sigma^*$ such that $su\in L$ and $P(su)=t$, see~\cite{wong98,pcl08}. This property is well known and widely used in supervisory control of hierarchical and distributed discrete-event systems, and, as mentioned in~\cite{pena2010}, also in compositional verification~\cite{FM2009} and modular synthesis~\cite{FW2006,HT2006}. If the projection does not satisfy the property, the co-domain of the projection can be extended so that it is satisfied. Although the computation of such a minimal extension is NP-hard, there exists a polynomial-time algorithm that finds an acceptable extension~\cite{FengWonham}. More information can be found in~\cite{pena2010}. 
 
  A language $K$ is {\em conditionally decomposable\/} with respect to alphabets $\Sigma_1$, $\Sigma_2$, and $\Sigma_k$, where $\Sigma_1\cap\Sigma_2\subseteq\Sigma_k\subseteq \Sigma_1\cup\Sigma_2$, if 
  \[
    K = P_{1+k}(K)\parallel P_{2+k}(K)
  \]
  where $P_{i+k}:(\Sigma_1\cup\Sigma_2)^*\to (\Sigma_i\cup\Sigma_k)^*$ is a projection, for $i=1,2$. 
  The definition is extended to more alphabets in a natural way.

  For simplicity, we formulated Problem~\ref{problem1} only for two subsystems. However, it is clear how to extend it to $n>2$ subsystems. It should perhaps be pointed out that for $n>2$, the set $\Sigma_k$ contains all shared events $\bigcup_{i\neq j} (\Sigma_i\cap\Sigma_j)$.
  
  We define the coordinator $G_k$ as the parallel composition of projections. It is a folklore in automata theory that, in the worst case, the computation of a projected generator can be of exponential size with respect to the number of states of the original generator. However, it is known that if the projection satisfies the observer property, then the projected generator is smaller than the original generator~\cite{wong98}. Therefore, we suggest the following steps to compute the alphabet $\Sigma_k$ and the coordinator $G_k$:
  \begin{enumerate}
    \item Let $\Sigma_k = \Sigma_1\cap \Sigma_2$ be the set of all shared events of the generators $G_1$ and $G_2$.
    \item Extend the alphabet $\Sigma_k$ so that $K$ and $\overline{K}$ are conditionally decomposable with respect to $\Sigma_1$, $\Sigma_2$, and $\Sigma_k$, cf.~\cite{scl12} for a polynomial algorithm.
    \item If the computation of $G_k$ in the following step is not possible (e.g., because of the state explosion), extend the alphabet $\Sigma_k$ so that the projection $P_k:\Sigma^* \to \Sigma_k^*$ is an $L(G_i)$-observer, for $i=1,2$, cf.~\cite{pcl08,pcl12}.
    \item Define the coordinator $G_k$ as $G_k = P_k(G_1) \parallel P_k(G_2)$, where $P_k:(\Sigma_1\cup\Sigma_2)^*\to \Sigma_k^*$.
  \end{enumerate}

\section{Existence of a solution}\label{sec:existenceOFsolution}
  We now provide necessary and sufficient conditions for the existence of a solution for Problem~\ref{problem1}.

  \begin{lemma}[Characterization]\label{thm:char}
    Consider the setting of Problem~\ref{problem1}. There are nonblocking and nonconflicting supervisors $S_{1}$ and $S_{2}$ such that the distributed closed-loop system satisfies 
    $
      L_m(S_{1}/[G_1 \| G_k]) \parallel L_m(S_{2}/[G_2 \| G_k]) = K
    $ 
    if and only if
      $L_m(S_1/(G_1\| G_k)) \parallel P_k(S_2) = P_{1+k}(K)$,
      $L_m(S_2/(G_2\| G_k)) \parallel P_k(S_1) = P_{2+k}(K)$, and
      $S_1$ and $S_2$ are nonblocking and nonconflicting supervisors with respect to $G_1\| G_k$ and $G_2\| G_k$, respectively. \QED
  \end{lemma}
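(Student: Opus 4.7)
The plan is to reduce both directions to the conditional decomposability $K = P_{1+k}(K)\parallel P_{2+k}(K)$ together with the standard distributivity identity
\[
P_{1+k}(L_1 \parallel L_2) \;=\; L_1 \parallel P_k(L_2),
\]
valid for $L_1 \subseteq (\Sigma_1\cup\Sigma_k)^*$ and $L_2 \subseteq (\Sigma_2\cup\Sigma_k)^*$ because the hypothesis $\Sigma_1\cap\Sigma_2\subseteq\Sigma_k$ forces $(\Sigma_1\cup\Sigma_k)\cap(\Sigma_2\cup\Sigma_k)=\Sigma_k$. Throughout, I will abbreviate $L_m(S_i)$ for $L_m(S_i/(G_i\|G_k))$ and read $P_k(S_i)$ as $P_k(L_m(S_i))$, which is the standard reading when the supervisor is represented as a generator.

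For the forward direction, the nonblocking and nonconflicting clause on the right-hand side is literally part of the hypothesis, so no work is needed there. For the two projected equations, I will apply $P_{1+k}$ to the equality $L_m(S_1)\parallel L_m(S_2) = K$ and invoke the identity above to obtain $P_{1+k}(K) = L_m(S_1)\parallel P_k(L_m(S_2)) = L_m(S_1)\parallel P_k(S_2)$; the symmetric calculation with $P_{2+k}$ yields the second equation.

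For the converse, I will start from $K = P_{1+k}(K)\parallel P_{2+k}(K)$, substitute the two hypothesized identities, and rearrange using commutativity and associativity of $\parallel$ to get
\[
K \;=\; L_m(S_1)\parallel L_m(S_2)\parallel P_k(S_1)\parallel P_k(S_2).
\]
It then remains to observe that each $P_k(S_i)$ is absorbed by $L_m(S_i)$: since the alphabet $\Sigma_k$ of $P_k(S_i)$ sits inside the alphabet $\Sigma_i\cup\Sigma_k$ of $L_m(S_i)$, we have $L_m(S_i)\parallel P_k(S_i) = L_m(S_i)$ as soon as $P_k(L_m(S_i))\subseteq P_k(S_i)$, which holds with equality under our reading of $P_k(S_i)$. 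The right-hand side therefore collapses to $L_m(S_1)\parallel L_m(S_2)$, which is what is required.

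The main obstacle I anticipate is the clean bookkeeping of alphabets in the distributivity identity: the inclusions $\Sigma_1\cap\Sigma_2\subseteq\Sigma_k\subseteq\Sigma_1\cup\Sigma_2$ enter in an essential way, and this identity is really the engine of the whole proof. A secondary subtlety is the notational identification of $P_k(S_i)$ with $P_k(L_m(S_i/(G_i\|G_k)))$, which is implicit in the statement but is exactly what makes the absorption step in the backward direction go through.
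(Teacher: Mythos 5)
Your proposal is correct and follows essentially the same route as the paper: the ``only if'' direction is the projection of $L_m(S_1/(G_1\|G_k))\parallel L_m(S_2/(G_2\|G_k))=K$ via the identity $P_k(L_1\parallel L_2)=P_k(L_1)\parallel P_k(L_2)$ (valid since $\Sigma_1\cap\Sigma_2\subseteq\Sigma_k$), and the ``if'' direction substitutes the two projected equations into $K=P_{1+k}(K)\parallel P_{2+k}(K)$ and absorbs $P_k(S_i)$ into $L_m(S_i/(G_i\|G_k))$, exactly as in the paper. The only difference is that the paper additionally runs the analogous computation for $\overline{K}$ to exhibit $L(S_1/(G_1\|G_k))\parallel L(S_2/(G_2\|G_k))=\overline{K}$ (the second requirement of Problem~\ref{problem1}), which in your setting follows immediately from nonblockingness and nonconflictingness together with the marked-language equality, so nothing essential is missing.
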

  
  The second part of the theorem gives us the following equations with variable languages $X_1$ and $X_2$:
  \begin{equation}\label{sol1}
    \begin{aligned}
      P_{1+k}(K) \subseteq X_1 \subseteq G_1\| G_k, && X_1 \cap (P^{1+k}_k)^{-1}P_k(X_2) &= P_{1+k}(K) \\
      P_{2+k}(K) \subseteq X_2 \subseteq G_2\| G_k, && X_2 \cap (P^{2+k}_k)^{-1}P_k(X_1) &= P_{2+k}(K)\,.
    \end{aligned}
  \end{equation}
  
  Let $\infCO(K,L)$ denote the infimal {\em prefix-closed\/} controllable superlanguage of $K$ with respect to $L$, cf.~\cite{CL08}. Then we immediately have the following.
  
  \begin{proposition}[Prefix-closed specification]\label{lem1}
    Consider the setting of Problem~\ref{problem1}. If the specification $K$ is prefix-closed, then there exists a solution of Problem~\ref{problem1} if and only if the languages 
      $T_1 = \infCO(P_{1+k}(K), G_1\| G_k)$ and
      $T_2 = \infCO(P_{2+k}(K), G_2\| G_k)$
    satisfy equations~(\ref{sol1}). \QED
  \end{proposition}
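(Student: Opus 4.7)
The strategy is to invoke Lemma~\ref{thm:char} to reduce the existence of a solution to the solvability of equations~(\ref{sol1}), and then exhibit $T_1, T_2$ as a canonical (minimal) solution whenever any solution exists. Since $K$ is prefix-closed, so are the projections $P_{1+k}(K)$ and $P_{2+k}(K)$, and any language arising from a supervisor in the automata framework, namely $X_i = L(S_i/(G_i \| G_k))$, is automatically prefix-closed and controllable with respect to $L(G_i \| G_k)$ and $\Sigma_u$. Hence Lemma~\ref{thm:char} for prefix-closed $K$ collapses to asking whether there exist prefix-closed controllable $X_1, X_2$ satisfying~(\ref{sol1}).

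For the ``if'' direction, if $T_1$ and $T_2$ satisfy~(\ref{sol1}), then each $T_i$ is by construction a prefix-closed controllable sublanguage of $L(G_i \| G_k)$ containing $P_{i+k}(K)$, so it is realized by some supervisor $S_i$ with $L(S_i/(G_i \| G_k)) = T_i$; Lemma~\ref{thm:char} then yields a solution of Problem~\ref{problem1} directly. (I would also note that for $K' \subseteq L$ prefix-closed, $\infCO(K', L) = K' \Sigma_u^* \cap L \subseteq L$, which gives $T_i \subseteq L(G_i\| G_k)$ cleanly.)

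For the ``only if'' direction, suppose there is a solution and let $X_1, X_2$ be the corresponding languages satisfying~(\ref{sol1}). Each $X_i$ is prefix-closed, controllable with respect to $L(G_i \| G_k)$ and $\Sigma_u$, contained in $L(G_i \| G_k)$, and contains $P_{i+k}(K)$, so by the defining minimality of the infimal prefix-closed controllable superlanguage, $T_i \subseteq X_i$. It remains to verify the cross-equation $T_i \cap (P^{i+k}_k)^{-1} P_k(T_j) = P_{i+k}(K)$ for $\{i,j\} = \{1,2\}$. The inclusion $\supseteq$ is a routine chase: from $P_{i+k}(K) \subseteq T_i$ and $P_k(K) = P_k(P_{j+k}(K)) \subseteq P_k(T_j)$ combined with the standard inclusion $(P^{i+k}_k)^{-1} P_k(P_{i+k}(K)) \supseteq P_{i+k}(K)$. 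The inclusion $\subseteq$ follows from monotonicity: $T_i \subseteq X_i$ and $P_k(T_j) \subseteq P_k(X_j)$ give
\[
  T_i \cap (P^{i+k}_k)^{-1} P_k(T_j) \subseteq X_i \cap (P^{i+k}_k)^{-1} P_k(X_j) = P_{i+k}(K),
\]
the last equality being the hypothesis that $X_1, X_2$ solve~(\ref{sol1}).

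The main obstacle I anticipate is not a deep one: it is ensuring the containment $T_i \subseteq L(G_i \| G_k)$ so that $T_i$ is a legitimate candidate for~(\ref{sol1}), which relies on the precise form of the infimal prefix-closed controllable superlanguage of a sublanguage of the plant. Once that is in hand, the proof is pure monotonicity of $T_i \mapsto T_i \cap (P^{i+k}_k)^{-1}P_k(T_j)$ in both arguments, together with the trivial projection identities $P_k \circ P_{j+k} = P_k$ on words over $\Sigma_1 \cup \Sigma_2$.
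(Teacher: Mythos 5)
Your proposal is correct and follows essentially the same route as the paper's proof: the ``if'' direction realizes $T_1,T_2$ as prefix-closed controllable (hence nonblocking, nonconflicting) supervisors and invokes Lemma~\ref{thm:char}, while the ``only if'' direction takes any solution, uses infimality to get $T_i$ contained in the solution languages, and sandwiches $P_{i+k}(K) \subseteq T_i \parallel P_k(T_j) \subseteq S_i \parallel P_k(S_j) = P_{i+k}(K)$, which is exactly the paper's chain of inclusions.
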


  The infimal controllable superlanguage is prefix-closed and it is not possible to compute the infimal non-prefix-closed controllable superlanguage, since it does not always exist. Thus, the previous lemma cannot be directly used for non-prefix-closed languages. However, we show that it is surprisingly sufficient to verify whether there exists a solution for the prefix-closure of the specification. 
  \begin{theorem}\label{thmMain}
    Consider the setting of Problem~\ref{problem1}. There exists a solution for specification $K$ if and only if there exists a solution for its prefix-closure $\overline{K}$. \QED
  \end{theorem}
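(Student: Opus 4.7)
My plan is to prove both directions of the equivalence, with the backward implication carrying the real content; the forward one amounts to a change of marking.

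For $(\Rightarrow)$, given supervisors $S_1,S_2$ that solve Problem~\ref{problem1} for $K$, the defining equality $L(S_1/[G_1\|G_k])\parallel L(S_2/[G_2\|G_k])=\overline{K}$ is already what is required for $\overline{K}$ on the generated side. Redefining each $S_i$ to mark every reachable state makes the marked-language equality coincide with the generated one, and the nonblocking and nonconflicting conditions are then trivial since every language in sight is prefix-closed.

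For $(\Leftarrow)$, suppose $\tilde S_1,\tilde S_2$ solve Problem~\ref{problem1} for $\overline{K}$, so $L(\tilde S_1)\parallel L(\tilde S_2)=\overline{K}$ and each $L_m(\tilde S_i)=L(\tilde S_i)$ is prefix-closed. I would keep the generated behaviour and install a new marking by setting
\[
  L(S_i/[G_i\|G_k]) := L(\tilde S_i/[G_i\|G_k]),\qquad L_m(S_i/[G_i\|G_k]) := L(\tilde S_i/[G_i\|G_k])\cap P_{i+k}(K).
\]
Using the distributivity $(A\cap B)\parallel(C\cap D)=(A\parallel C)\cap(B\parallel D)$ for languages $A,B$ over $\Sigma_1\cup\Sigma_k$ and $C,D$ over $\Sigma_2\cup\Sigma_k$, together with conditional decomposability of $K$, the marked closed-loop evaluates to
\[
  L_m(S_1/[G_1\|G_k])\parallel L_m(S_2/[G_2\|G_k]) = \bigl(L(\tilde S_1)\parallel L(\tilde S_2)\bigr)\cap\bigl(P_{1+k}(K)\parallel P_{2+k}(K)\bigr)=\overline{K}\cap K=K,
\]
while the generated-language equation is inherited unchanged from $\tilde S_1,\tilde S_2$.

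The delicate step is verifying nonblocking, that is, $\overline{L_m(S_i/[G_i\|G_k])}=L(S_i/[G_i\|G_k])=L(\tilde S_i/[G_i\|G_k])$. Given $w\in L(\tilde S_i)$, I would lift $w$ through the parallel-composition equality to some $s\in\overline{K}$ with $P_{i+k}(s)=w$, extend $s$ to $st\in K$, and then observe that $P_{i+k}(st)\in P_{i+k}(K)$ extends $w$ and still lies in $L(\tilde S_i)$ because $P_{i+k}(\overline{K})\subseteq L(\tilde S_i)$. Nonconflictingness of the pair then collapses to $\overline{L_m(S_1)\parallel L_m(S_2)}=\overline{K}=L(\tilde S_1)\parallel L(\tilde S_2)=L(S_1)\parallel L(S_2)$. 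The main obstacle I anticipate is precisely this nonblocking argument, because it is where $\overline{K}$-solvability must be combined with conditional decomposability of $K$ to guarantee that every trace in $L(\tilde S_i)$ can actually be completed inside $P_{i+k}(K)$ without leaving $L(\tilde S_i)$.
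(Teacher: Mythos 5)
Your forward direction is fine and matches the paper's. The backward direction, however, has a genuine gap at exactly the step you flag as delicate. Your marking $L_m(S_i)=L(\tilde S_i/[G_i\|G_k])\cap P_{i+k}(K)$ equals $P_{i+k}(K)$ (since $\overline{P_{i+k}(K)}\subseteq L(\tilde S_i/[G_i\|G_k])$), so nonblockingness of $S_i$ would force $L(\tilde S_i/[G_i\|G_k])=\overline{P_{i+k}(K)}$. But a solution for $\overline{K}$ only guarantees, via Lemma~\ref{thm:char}, that $L(\tilde S_1/(G_1\|G_k))\parallel P_k(\tilde S_2)=\overline{P_{1+k}(K)}$ (and symmetrically); the local closed-loop language $L(\tilde S_1/(G_1\|G_k))$ alone may strictly contain $\overline{P_{1+k}(K)}$, the surplus words being pruned only through synchronization with the other supervisor on $\Sigma_k$. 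Your lifting step---given $w\in L(\tilde S_i)$, find $s\in\overline{K}$ with $P_{i+k}(s)=w$---is precisely where this breaks: $P_{1+k}(\overline{K})=L(\tilde S_1/(G_1\|G_k))\parallel P_k(\tilde S_2)$ can be a proper subset of $L(\tilde S_1/(G_1\|G_k))$, so such an $s$ need not exist, and a word $w\in L(\tilde S_i/[G_i\|G_k])\setminus\overline{P_{i+k}(K)}$ then has no continuation into your marked language at all, i.e., $S_i$ is blocking. Conditional decomposability of $K$ does not rescue this, and you also cannot repair it by shrinking the generated language to $\overline{P_{i+k}(K)}$, because that language need not be controllable with respect to $G_i\|G_k$ (which is exactly why Proposition~\ref{lem1} works with infimal controllable superlanguages).

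The paper closes this hole by choosing a larger marking, $L_m(G_{S_i})=P_{i+k}(K)\cup\bigl(L(G_{S_i})\setminus\overline{P_{i+k}(K)}\bigr)$. The extra marked words make each local supervisor trivially nonblocking (a word outside $\overline{P_{i+k}(K)}$ is itself marked; a word inside extends into $P_{i+k}(K)$), and they are harmless globally because the composed generated language is already confined to $\overline{P_{1+k}(K)}$, so in $L_m(G_{S_1}/(G_1\|G_k))\parallel P_k(L_m(G_{S_2}))$ only the $P_{1+k}(K)$-part of the marking survives and the composition equals $P_{1+k}(K)$ exactly; the remaining computations of the marked and generated global languages then go through essentially as you wrote them. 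With that modified marking your argument can be repaired; as written, the nonblocking claim does not hold.
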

  
  In other words, assuming that $K$ and $\overline{K}$ are conditionally decomposable, it is sufficient to check whether there exists a solution for the prefix-closure $\overline{K}$ of the specification $K$ using Proposition~\ref{lem1}. By Theorem~\ref{thmMain}, this implies a solution for the specification $K$, which can actually be derived from the solution for $\overline{K}$.

\section{Relaxation of coordination control}\label{sec:relaxation}
  If a solution of Problem~\ref{problem1} does not exist, we make use of the coordination control framework to obtain an acceptable sublanguage of the specification for which a solution exists. 
  
  Coordination control is based on the concept of conditional independence of probability theory, which allows for coordinated systems not only in discrete-event systems, but also in linear, stochastic, and hybrid systems, see~\cite{schuppen:etal:2011:ejc}. 

  We first recall the original coordination control framework and then relax it so that it admits more and larger solutions, cf.~Example~\ref{ex1}. 
  
\subsection{Original coordination control framework}
  We briefly recall the original coordination control framework developed in~\cite{JDEDS}. The notation is that of~\cite{JDEDS}.
  
  \begin{problem}[Coordination control problem]\label{problem:Origcontrolsynthesis}
    Consider generators $G_1$ and $G_2$ over the alphabets $\Sigma_1$ and $\Sigma_2$, resp., and a generator $G_k$ (a {\em coordinator\/}) over an alphabet $\Sigma_k$, where $\Sigma_1\cap\Sigma_2\subseteq \Sigma_k$. Assume that a specification $K \subseteq L_m(G_1 \| G_2 \| G_k)$ and its prefix-closure $\overline{K}$ are conditionally decomposable with respect to $\Sigma_1$, $\Sigma_2$, and $\Sigma_k$. The aim is to determine nonblocking supervisors $S_1$, $S_2$, and $S_k$ such that $L_m(S_k/G_k)\subseteq P_k(K)$, $L_m(S_i/ [G_i \| (S_k/G_k) ])\subseteq P_{i+k}(K)$, for $i=1,2$, and the closed-loop system with the coordinator satisfies $L_m(S_1/ [G_1 \| (S_k/G_k) ]) \parallel L_m(S_2/ [G_2 \| (S_k/G_k) ]) = K$.
    \QEDopen
  \end{problem}

  Although the coordinator $G_k$ in the statement of Problem~\ref{problem:Origcontrolsynthesis} is a general automaton, we suggested to construct it in the same way as in Problem~\ref{problem1}. Therefore, we may assume that $G_k = P_k(G_1) \parallel P_k(G_2)$. In what follows, we use the notation $\Sigma_{i,u} = \Sigma_i \cap \Sigma_u$ to denote the set of locally uncontrollable events of the alphabet $\Sigma_i$.

  \begin{definition}[Conditional controllability]\label{def:Origconditionalcontrollability}
    Let $G_1$ and $G_2$ be generators over $\Sigma_1$ and $\Sigma_2$, respectively, and let $G_k$ be a coordinator over $\Sigma_k$. A language $K\subseteq L_m(G_1\| G_2\| G_k)$ is {\em conditionally controllable\/} for generators $G_1$, $G_2$, $G_k$ and uncontrollable alphabets $\Sigma_{1,u}$, $\Sigma_{2,u}$, $\Sigma_{k,u}$ if
      (i) $P_k(K)$ is controllable with respect to $L(G_k)$ and $\Sigma_{k,u}$ and
      (ii) $P_{i+k}(K)$ is controllable with respect to $L(G_i) \parallel \overline{P_k(K)}$ and $\Sigma_{i+k,u}$,
    where $\Sigma_{i+k,u} = (\Sigma_i\cup \Sigma_k)\cap \Sigma_u$, for $i=1,2$. \QEDopen
  \end{definition}
  
  The main existential result follows.
  \begin{theorem}[\cite{JDEDS}]
    Consider the setting of Problem~\ref{problem:Origcontrolsynthesis}. There exist nonblocking supervisors $S_1$, $S_2$, and $S_k$ such that $L_m(S_1/[G_1 \| (S_k/G_k)]) \parallel L_m(S_2/[G_2 \| (S_k/G_k)]) =  K$ if and only if the specification $K$ is conditionally controllable with respect to generators $G_1$, $G_2$, $G_k$ and uncontrollable alphabets $\Sigma_{1,u}$, $\Sigma_{2,u}$, $\Sigma_{k,u}$. \QED
  \end{theorem}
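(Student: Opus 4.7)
The plan is to prove both implications, with sufficiency being the constructive step and necessity essentially a rearrangement of the hypothesis. Throughout, the tool will be the classical monolithic Ramadge--Wonham existence result, applied three times to the plants $G_k$, $G_1\|(S_k/G_k)$, and $G_2\|(S_k/G_k)$.

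For the sufficiency direction, assume $K$ is conditionally controllable. I would first apply the monolithic existence theorem to $G_k$ with specification $P_k(K)$: condition~(i) of conditional controllability, together with $P_k(K)\subseteq L_m(G_k)$ (obtained by projecting the inclusion $K\subseteq L_m(G_1\|G_2\|G_k)$ onto $\Sigma_k$), yields a nonblocking supervisor $S_k$ with $L_m(S_k/G_k)=P_k(K)$ and $L(S_k/G_k)=\overline{P_k(K)}$. Next I would view $G_i\|(S_k/G_k)$ as a plant whose generated language is $L(G_i)\parallel\overline{P_k(K)}$; condition~(ii) of conditional controllability, combined with the inclusion $P_{i+k}(K)\subseteq L_m(G_i)\parallel P_k(K)$ (which follows from $K\subseteq L_m(G_1\|G_2\|G_k)$ together with conditional decomposability), then produces nonblocking supervisors $S_i$ with $L_m(S_i/[G_i\|(S_k/G_k)])=P_{i+k}(K)$ for $i=1,2$. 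The final step is to verify
\[
  L_m(S_1/[G_1\|(S_k/G_k)]) \parallel L_m(S_2/[G_2\|(S_k/G_k)]) = P_{1+k}(K)\parallel P_{2+k}(K)=K,
\]
where the second equality is exactly the conditional decomposability of $K$; the analogous identity on the generated side follows from conditional decomposability of $\overline{K}$.

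For the necessity direction, I would assume the three supervisors exist and project the marked closed-loop identity onto $\Sigma_k$; combined with the hypothesised inclusions $L_m(S_k/G_k)\subseteq P_k(K)$ and $L_m(S_i/[G_i\|(S_k/G_k)])\subseteq P_{i+k}(K)$, this forces equality throughout. Since closed-loop marked languages of nonblocking supervisors are automatically controllable with respect to their plant, conditions~(i) and~(ii) of Definition~\ref{def:Origconditionalcontrollability} follow directly, giving conditional controllability of $K$.

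The principal obstacle lies in managing nonblockingness jointly with nonconflictingness: it is not immediate that the supervisors $S_1$ and $S_2$, once synchronised with $S_k/G_k$, do not introduce blocks relative to each other. This is where the hypothesis that $\overline{K}$ is also conditionally decomposable is indispensable, yielding $\overline{P_{1+k}(K)}\parallel\overline{P_{2+k}(K)}=\overline{K}$ and ruling out spurious blocking behaviour when the three supervised subsystems are composed. Any technical slip in the argument is most likely to surface at this point, and I would devote the greatest care to the prefix-closure bookkeeping there.
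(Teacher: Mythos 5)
Your proposal is correct and follows essentially the same route as the proof this paper defers to \cite{JDEDS}: three applications of the monolithic controllability theorem (to $G_k$ and to $G_i\|(S_k/G_k)$, $i=1,2$) plus conditional decomposability for sufficiency, and projection of the closed-loop equality together with controllability of closed-loop languages for necessity. The only step worth spelling out is, in the necessity direction, that $P_k(K)\subseteq P^{i+k}_k\bigl(L_m(S_i/[G_i\|(S_k/G_k)])\bigr)\subseteq L_m(S_k/G_k)$, which combined with $L_m(S_k/G_k)\subseteq P_k(K)$ and nonblockingness gives $L(S_k/G_k)=\overline{P_k(K)}$, so that the plant in condition~(ii) is exactly $L(G_i)\parallel\overline{P_k(K)}$; also note that conditional decomposability of $\overline{K}$ is only needed if one additionally wants the generated closed-loop languages to compose to $\overline{K}$, not for the marked-language equality the theorem asserts.
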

  
  Similarly as in the monolithic supervisory control, if the specification fails to be conditionally controllable, the supremal conditionally controllable sublanguage is computed. It always exists~\cite{JDEDS}. Consider the setting of Problem~\ref{problem:Origcontrolsynthesis} and define the languages
  \begin{align*}
      \supC_k     & =  \supC(P_k(K), L(G_k), \Sigma_{k,u})\\
      \supC_{i+k} & =  \supC(P_{i+k}(K), L(G_i) \parallel \overline{\supC_k}, \Sigma_{i+k,u})
  \end{align*}
  for $i=1,2$. If $\supC_{1+k}$ and $\supC_{2+k}$ are nonconflicting and $P_k(\supC_{1+k}) \cap P_k(\supC_{2+k})$ is controllable with respect to $L(G_k)$ and $\Sigma_{k,u}$, then $\supC_{1+k} \parallel \supC_{2+k}$ is the supremal conditionally controllable sublanguage of $K$. Otherwise, we compute a new supervisor
  \begin{align*}
      \supC(P_k(\supC_{1+k})\cap P_k(\supC_{2+k}), L(G_k), \Sigma_{k,u})
  \end{align*}
  denoted $\supC_k'$. This gives the following result, which is not stated in~\cite{JDEDS}, but is in a more general form stated in~\cite{case15}.
  \begin{theorem}\label{thm4}
    Consider the setting of Problem~\ref{problem:Origcontrolsynthesis} and the languages defined above. If $\supC_{i+k}$ and $\supC_{k}'$ are synchronously nonconflicting (e.g., prefix-closed) for $i=1,2$, then $\supC'_k \parallel \supC_{1+k} \parallel  \supC_{2+k}$ is the supremal conditionally controllable sublanguage of $K$. \QED
  \end{theorem}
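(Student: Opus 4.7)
I plan to verify three things for $M := \supC'_k \parallel \supC_{1+k} \parallel \supC_{2+k}$: (a) $M \subseteq K$, (b) $M$ is conditionally controllable, and (c) every conditionally controllable sublanguage of $K$ is contained in $M$. Step (a) follows immediately from $\supC_{i+k} \subseteq P_{i+k}(K)$ together with the conditional decomposability of $K$, via $M \subseteq \supC_{1+k} \parallel \supC_{2+k} \subseteq P_{1+k}(K) \parallel P_{2+k}(K) = K$.

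For step (b), I first identify the relevant projections of $M$. Since $\supC'_k \subseteq P_k(\supC_{1+k}) \cap P_k(\supC_{2+k})$ by the very construction of $\supC'_k$, a straightforward interleaving argument yields $P_k(M) = \supC'_k$ and $P_{i+k}(M) = \supC'_k \parallel \supC_{i+k}$ for $i = 1, 2$. The first is controllable with respect to $L(G_k)$ and $\Sigma_{k,u}$ by the definition of $\supC'_k$. To show $\supC'_k \parallel \supC_{i+k}$ is controllable with respect to $L(G_i) \parallel \overline{\supC'_k}$, pick a prefix $s$ of this product and an uncontrollable $\sigma$ with $s\sigma$ in the plant, then split on whether $\sigma \in \Sigma_i \setminus \Sigma_k$ or $\sigma \in \Sigma_{k,u}$. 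In the first case, the $\Sigma_k$-projection is unchanged, so controllability of $\supC_{i+k}$ with respect to $L(G_i) \parallel \overline{\supC_k}$ (which applies because $\supC'_k \subseteq \supC_k$, hence $\overline{\supC'_k} \subseteq \overline{\supC_k}$) places $s\sigma$ in $\overline{\supC_{i+k}}$. In the second, controllability of $\supC'_k$ in $L(G_k)$ absorbs the event on the $\Sigma_k$ side while controllability of $\supC_{i+k}$ absorbs it on the $(\Sigma_i \cup \Sigma_k)$ side. The synchronous-nonconflict hypothesis is then invoked to re-combine these two prefix-closed conclusions into $\overline{\supC'_k \parallel \supC_{i+k}}$.

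For step (c), let $L \subseteq K$ be conditionally controllable. From $P_k(L) \subseteq P_k(K)$ and controllability of $P_k(L)$ in $L(G_k)$, we obtain $P_k(L) \subseteq \supC_k$. The key intermediate claim is that $P_{i+k}(L)$ is in fact controllable with respect to the larger plant $L(G_i) \parallel \overline{\supC_k}$: for any uncontrollable $\sigma$ extending a prefix of $P_{i+k}(L)$ into this plant, if $\sigma \in \Sigma_i \setminus \Sigma_k$ the $\Sigma_k$-projection is unaffected, and if $\sigma \in \Sigma_{k,u}$ then controllability of $P_k(L)$ in $L(G_k)$ pushes the $\Sigma_k$-projection back into $\overline{P_k(L)}$. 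In either case the extension lies inside the smaller plant $L(G_i) \parallel \overline{P_k(L)}$, against which $P_{i+k}(L)$ is controllable by hypothesis. Hence $P_{i+k}(L) \subseteq \supC_{i+k}$. Projecting to $\Sigma_k$ then gives $P_k(L) \subseteq P_k(\supC_{1+k}) \cap P_k(\supC_{2+k})$, and combined with controllability of $P_k(L)$ in $L(G_k)$ this forces $P_k(L) \subseteq \supC'_k$. Every $w \in L$ therefore meets the three defining conditions of $M$, so $L \subseteq M$.

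The main technical obstacle is the controllability verification inside step (b): one must simultaneously track the $\Sigma_k$-projection (now confined to $\supC'_k$ rather than $\supC_k$) and the local $\Sigma_i$-part, and the synchronous-nonconflict hypothesis is invoked precisely to convert the two separate prefix-closed conclusions back into a statement about $\overline{\supC'_k \parallel \supC_{i+k}}$. Everything else is routine projection bookkeeping together with the standard monotonicity of the supremal controllable sublanguage operator.
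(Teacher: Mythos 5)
Your argument is correct, and note that the paper itself gives no proof of this statement (it only cites the more general result in~\cite{case15}; the appendix proves only Lemma~\ref{thm:char}, Proposition~\ref{lem1} and Theorem~\ref{thmMain}), so there is nothing in the text to compare against; your route is the natural one and mirrors the structure used for the relaxed-framework analogues (Theorem~\ref{thm2b}, Proposition~\ref{prop2}) whose proofs are deferred to~\cite{relaxed}. The three steps all check out: the projection identities $P_k(M)=\supC'_k$ and $P_{i+k}(M)=\supC'_k\parallel\supC_{i+k}$ follow exactly (no nonconflictingness needed) from Lemma~\ref{lemma:Wonham} together with $\supC'_k\subseteq P_k(\supC_{1+k})\cap P_k(\supC_{2+k})$; the controllability transfer arguments in (b) and (c) are sound; and supremality indeed uses only the definition of conditional controllability of the competitor $L$, not the nonconflictingness hypothesis. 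Two small points you should make explicit rather than leave implicit: first, the containment $\supC'_k\subseteq\supC_k$, which you invoke to pass from the plant $L(G_i)\parallel\overline{\supC'_k}$ to $L(G_i)\parallel\overline{\supC_k}$, deserves its one-line justification ($P_k(\supC_{1+k})\cap P_k(\supC_{2+k})\subseteq P_k(K)$ and $\supC'_k$ is controllable with respect to $L(G_k)$, so supremality of $\supC_k$ applies); second, in the case $\sigma\in\Sigma_{k,u}$ of step (b) you do not actually need controllability of $\supC'_k$ on the $\Sigma_k$ side, since membership of $P_k(s\sigma)$ in $\overline{\supC'_k}$ is already forced by the plant $L(G_i)\parallel\overline{\supC'_k}$ itself --- harmless, but the cleaner bookkeeping makes it clear that the nonconflictingness hypothesis is used exactly once, to convert $s\sigma\in\overline{\supC_{i+k}}\parallel\overline{\supC'_k}$ into $s\sigma\in\overline{\supC_{i+k}\parallel\supC'_k}$.
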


\subsection{Relaxed coordination control framework}  
  In this section, we relax the coordination control problem. Proofs of the results presented in this section can be found in the technical report~\cite{relaxed}. Unlike the original approach, the supervisor for the coordinator is not computed and, thus, it is not included in the closed-loop system. We comment on this below the definition.
 
  \begin{problem}[Relaxed coordination control problem]\label{problem:relaxed}
    Consider generators $G_1$ and $G_2$ over the alphabets $\Sigma_1$ and $\Sigma_2$, respectively. Let $\Sigma_k$ be an alphabet such that $\Sigma_1 \cap \Sigma_2 \subseteq \Sigma_k \subseteq \Sigma_1\cup\Sigma_2$. A generator $G_k$ over the alphabet $\Sigma_k$ is called a coordinator. Assume that a specification $K \subseteq L_m(G_1 \| G_2 \| G_k)$ and its prefix-closure $\overline{K}$ are conditionally decomposable with respect to $\Sigma_1$, $\Sigma_2$, and $\Sigma_k$. The aim  is to determine nonblocking supervisors $S_1$ and $S_2$ such that $L_m(S_i/ [G_i \| G_k ])\subseteq P_{i+k}(K)$, for $i=1,2$, and the closed-loop system satisfies $L_m(S_1/ [G_1 \| G_k ]) \parallel L_m(S_2/ [G_2 \| G_k ]) = K$. \QEDopen
  \end{problem}

  The construction of a coordinator remains unchanged, compared to the original framework.
  
  The difference between the original and the relaxed problem is that the supervisor $S_k$ and, hence, the closed-loop system $S_k/G_k$ for the coordinator part of the specification is not computed. The original motivation to include the supervisor $S_k$ was the anti-monotonicity of the basic supervisory control operator (the supremal controllable sublanguage). Indeed, if the specification is fixed, then making the plant language smaller results in increasing permissiveness of the supervisors given by the supremal controllable sublanguages. However, it turns out that decreasing the plant $G_i\| G_k$ for the local supervisors by replacing $G_k$ with, in general, a smaller closed-loop system $S_k/G_k$ does not change the permissiveness of the local closed-loop systems $S_{i}/[G_i \| (S_k/G_k)]$. This is because of the transitivity of controllability, since the closed-loop $S_k/G_k$ is always controllable with respect to $G_k$. This means that a language $M \subseteq G_i \| (S_k/G_k)$ is controllable with respect to $G_i \| (S_k/G_k)$ if and only if it is controllable with respect to $G_i \| G_k$. For this reason, the supervisor for the coordinator does not help and is not used in the relaxed framework. 
  
  \begin{definition}[Relaxed conditional controllability]\label{def:conditionalcontrollability}
    Let $G_1$ and $G_2$ be generators over the alphabets $\Sigma_1$ and $\Sigma_2$, respectively, and let $G_k$ be a coordinator over the alphabet $\Sigma_k$. A language $K\subseteq L_m(G_1\| G_2\| G_k)$ is {\em relaxed conditionally controllable\/} for generators $G_1$, $G_2$, $G_k$ if the projected specification $P_{i+k}(K)$ is controllable with respect to $L(G_i \| G_k)$ and $\Sigma_{i+k,u}$, where $\Sigma_{i+k,u}=(\Sigma_i\cup \Sigma_k)\cap \Sigma_u$, for $i=1,2$. \QEDopen
  \end{definition}

  By definition, any solution of the coordination control problem is also a solution of the relaxed coordination control problem. The opposite does not hold as demonstrated in the following simple example. 
  \begin{example}\label{ex1}
    Let $L_m(G_1)=\{aa_1bd\}$ and $L_m(G_2)=\{aa_2cd\}$, where the set of controllable events is $\Sigma_c=\{b,c\}$. Let the specification $K$ be the composition of $K_1 = \{aa_1\}$ and $K_2=\{aa_2\}$. Then $K$ it is conditionally decomposable with respect to $\Sigma_1=\{a,a_1,b,d\}$, $\Sigma_2=\{a,a_2,c,d\}$, and $\Sigma_k=\{a,d\}$. 
    Notice that $P_k(K)=\{a\}$ and $G_k=P_k(G_1)\parallel P_k(G_2) = \{ad\}$. Therefore, $P_k(K)$ is {\em not} controllable with respect to $L(G_k)$, thus it is {\em not} conditionally controllable and, hence, not a solution of the original coordination control problem. Moreover, there does not exist a nonempty controllable sublanguage of $P_k(K)$ that is controllable with respect to $L(G_k)$. 
    On the other hand, $P_{i+k}(K)=K_i$ and $L_m(G_i\| G_k)=L_m(G_i)$, hence $P_{i+k}(K)$ is controllable with respect to $L(G_i\| G_k)$, for $i=1,2$. Since nothing is required for $P_k(K)$ in the relaxed framework, $K$ is a solution of the relaxed coordination control problem.
    \QEDopen
  \end{example}
  
  The advantage of the relaxed framework is thus not only a simplification of the notation, but also the fact that there exist solutions in the relaxed framework that cannot be achieved in the original coordination control framework. However, we should mention that the relaxed framework also restricts the set of potential solutions, i.e., there exist solutions of Problem~\ref{problem1} that cannot be achieved in the relaxed framework. This will be clarified in Section~\ref{sec:comparison}. Therefore, further relaxations of this framework would be of interest. 
  
  We now show that the main results of the original framework hold in the relaxed framework, too. 
 
  \begin{theorem}\label{th:controlsynthesissafety}
    Consider the setting of Problem~\ref{problem:relaxed}. There exist nonblocking supervisors $S_1$ and $S_2$ such that the closed-loop system satisfies $L_m(S_1/[G_1 \| G_k]) \parallel L_m(S_2/[G_2 \| G_k]) = K$ if and only if the specification $K$ is relaxed conditionally controllable for generators $G_1$, $G_2$, $G_k$. \QED
  \end{theorem}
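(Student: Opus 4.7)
The plan is to prove both implications directly. Write $L_i := L_m(S_i/[G_i\| G_k])$ and $\Sigma_{i+k,u} := (\Sigma_i\cup\Sigma_k)\cap \Sigma_u$ throughout.

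For the ``only if'' direction, suppose nonblocking supervisors $S_1,S_2$ exist with $L_1 \parallel L_2 = K$ and, per Problem~\ref{problem:relaxed}, $L_i \subseteq P_{i+k}(K)$. My first step is to upgrade this inclusion to the identity $L_i = P_{i+k}(K)$. For the reverse inclusion I use the elementary property $P_{i+k}(L_1\parallel L_2) \subseteq L_i$, immediate from $L_1\parallel L_2 \subseteq P_{i+k}^{-1}(L_i)$; applied with $L_1\parallel L_2 = K$, it yields $P_{i+k}(K) \subseteq L_i$. With the identity $L_i = P_{i+k}(K)$ in hand, I invoke the standard fact that for any nonblocking supervisor $S_i$ of the monolithic plant $G_i\| G_k$, the generated closed-loop language $L(S_i/[G_i\| G_k]) = \overline{L_i}$ is controllable with respect to $L(G_i\| G_k)$ and $\Sigma_{i+k,u}$. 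Since $\overline{L_i} = \overline{P_{i+k}(K)}$, this is precisely controllability of $P_{i+k}(K)$, so $K$ is relaxed conditionally controllable.

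For the ``if'' direction, assume $P_{i+k}(K)$ is controllable with respect to $L(G_i\| G_k)$ and $\Sigma_{i+k,u}$ for $i=1,2$. From $K \subseteq L_m(G_1\| G_2\| G_k)$ and the general inclusion $P_{i+k}(L_m(G_1\| G_2\| G_k)) \subseteq L_m(G_i\| G_k)$ (the projection of a synchronous product is contained in each component), one has $P_{i+k}(K) \subseteq L_m(G_i\| G_k)$. The monolithic Ramadge--Wonham existence theorem applied to plant $G_i\| G_k$ and specification $P_{i+k}(K)$ then yields a nonblocking supervisor $S_i$ with $L_m(S_i/[G_i\| G_k]) = P_{i+k}(K)$. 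Composing in parallel and invoking conditional decomposability of $K$ gives $L_1 \parallel L_2 = P_{1+k}(K) \parallel P_{2+k}(K) = K$, as required.

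The main obstacle is the bookkeeping identity $L_i = P_{i+k}(K)$ in the forward direction: it fuses the product hypothesis $L_1\parallel L_2 = K$ with the inclusion $L_i \subseteq P_{i+k}(K)$ from Problem~\ref{problem:relaxed} via the basic synchronous-product property $P_{i+k}(L_1\parallel L_2)\subseteq L_i$. The backward direction reduces to monolithic synthesis plus conditional decomposability, modulo the standard $L_m(G_i\| G_k)$-closedness hypothesis implicit in applying Ramadge--Wonham, which is inherited from the corresponding hypothesis on $K$.
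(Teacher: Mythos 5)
Your proof is correct and takes essentially the same route as the paper's own argument (which is deferred to the technical report~\cite{relaxed}): project the product equality and combine it with the Problem~\ref{problem:relaxed} requirement $L_m(S_i/[G_i\| G_k])\subseteq P_{i+k}(K)$ to get $L_m(S_i/[G_i\| G_k])=P_{i+k}(K)$, then use controllability of nonblocking closed-loop languages for necessity, and monolithic synthesis plus conditional decomposability for sufficiency (note that invoking the inclusion from Problem~\ref{problem:relaxed} is indeed essential, as the example in Section~\ref{sec:comparison} shows the equivalence fails without it). One minor remark: in this paper's framework the supervisor marks according to the specification ($L_m(S/G)=L(S/G)\cap K$), so the monolithic existence result needs only controllability, and your closing caveat about $L_m(G_i\| G_k)$-closedness is moot---in particular, it is not ``inherited'' from any hypothesis on $K$, since no such closedness assumption appears in Problem~\ref{problem:relaxed}.
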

  
  If the specification cannot be achieved as the resulting behavior of the coordinated system according to Theorem~\ref{th:controlsynthesissafety}, we describe a procedure to compute the maximal sublanguage of the specification that satisfies both these conditions. We use the notation
  \begin{equation}\label{supccn}
    \supccnr = \supccnr(K, L, (\Sigma_{1,u}, \Sigma_{2,u}, \Sigma_{k,u}))
  \end{equation}
  to denote the supremal relaxed conditionally controllable sublanguage of $K$ with respect to plant $L=L(G_1\| G_2\| G_k)$ and the sets of uncontrollable events $\Sigma_{1,u}$, $\Sigma_{2,u}$, $\Sigma_{k,u}$, which exists and equals to the union of all relaxed conditionally controllable sublanguages of the language $K$. Furthermore, we define the supremal controllable languages for the local plants combined with the coordinator. Consider the setting of Problem~\ref{problem:relaxed} and define the languages
  \begin{equation}\label{eqCNr}
    \begin{aligned}
      \supcnr_{1+k} & = \supcn(P_{1+k}(K), L(G_1\| G_k), \Sigma_{1+k,u})\\
      \supcnr_{2+k} & = \supcn(P_{2+k}(K), L(G_2\| G_k), \Sigma_{2+k,u})
    \end{aligned}
  \end{equation}
  where $\supcn(K,L,\Sigma_u)$ denotes the supremal controllable sublanguage of $K$ with respect to $L$ and $\Sigma_u$. 

  We now generalize the sufficient conditions for a distributed computation of the supremal conditionally controllable sublanguage from the original to the relaxed framework. The sufficient conditions are formulated in terms of controllability of the composition of local supervisors projected to the coordinator alphabet. The first main constructive result of the relaxed coordination control framework to guarantee relaxed conditional controllability in a maximally permissive way is stated below.
  
  \begin{theorem}\label{thm2b}
    Consider the setting of Problem~\ref{problem:relaxed} and the languages defined in~(\ref{eqCNr}). If $\supcnr_{1+k}$ and $\supcnr_{2+k}$ are synchronously nonconflicting and $P_k(\supcnr_{1+k})\cap P_k(\supcnr_{2+k})$ is controllable with respect to $L(G_k)$ and $\Sigma_{k,u}$, then $\supcnr_{1+k} \parallel \supcnr_{2+k} = \supccnr(K, L, (\Sigma_{1,u}, \Sigma_{2,u}, \Sigma_{k,u}))$, for $L=L(G_1\| G_2\| G_k)$. \QED
  \end{theorem}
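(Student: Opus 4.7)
The plan is to prove the equality by establishing both inclusions, treating $\supcnr_{1+k} \parallel \supcnr_{2+k}$ as a candidate relaxed conditionally controllable sublanguage of $K$. For the inclusion $\supcnr_{1+k} \parallel \supcnr_{2+k} \subseteq \supccnr$, I would first check that this parallel composition lies inside $K$: since $\supcnr_{i+k} \subseteq P_{i+k}(K)$ for $i=1,2$, and $K = P_{1+k}(K) \parallel P_{2+k}(K)$ by conditional decomposability, monotonicity of parallel composition gives the containment immediately. I would then verify relaxed conditional controllability, namely that $P_{i+k}(\supcnr_{1+k} \parallel \supcnr_{2+k})$ is controllable with respect to $L(G_i \| G_k)$ and $\Sigma_{i+k,u}$. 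Using $\Sigma_1\cap\Sigma_2\subseteq \Sigma_k$, projection distributes across the parallel composition and yields $P_{i+k}(\supcnr_{1+k} \parallel \supcnr_{2+k}) = \supcnr_{i+k} \parallel P_k(\supcnr_{j+k})$ with $\{i,j\}=\{1,2\}$. The controllability then follows by combining the controllability of $\supcnr_{i+k}$ with respect to $L(G_i\|G_k)$ with the hypothesis that $P_k(\supcnr_{1+k}) \cap P_k(\supcnr_{2+k})$ is controllable with respect to $L(G_k)$, which handles the uncontrollable events seen only by the coordinator. The synchronous nonconflictness assumption is used to commute prefix closure with parallel composition, $\overline{\supcnr_{1+k} \parallel \supcnr_{2+k}} = \overline{\supcnr_{1+k}} \parallel \overline{\supcnr_{2+k}}$, so that the controllability check (a property of prefix closures) can be transported through the composition.

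For the reverse inclusion $\supccnr \subseteq \supcnr_{1+k} \parallel \supcnr_{2+k}$, I would take an arbitrary relaxed conditionally controllable sublanguage $M$ of $K$ and show that $M \subseteq \supcnr_{1+k} \parallel \supcnr_{2+k}$; the claim then follows by taking the union over all such $M$. From $M \subseteq K$ and conditional decomposability of $K$ one obtains $M \subseteq P_{1+k}(M) \parallel P_{2+k}(M)$. By the definition of relaxed conditional controllability applied to $M$, each $P_{i+k}(M)$ is controllable with respect to $L(G_i \| G_k)$ and $\Sigma_{i+k,u}$, and is contained in $P_{i+k}(K)$. Supremality of the language $\supcnr_{i+k}$ from~(\ref{eqCNr}) then forces $P_{i+k}(M) \subseteq \supcnr_{i+k}$, and monotonicity of parallel composition yields $M \subseteq \supcnr_{1+k} \parallel \supcnr_{2+k}$.

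The hard part will be the controllability verification in the first inclusion. Passing from controllability of $\supcnr_{i+k}$ with respect to $L(G_i \| G_k)$ to controllability of $\supcnr_{i+k} \parallel P_k(\supcnr_{j+k})$ with respect to the same plant requires a careful analysis of the case where an uncontrollable event in $\Sigma_k$ is enabled at a reachable state of $G_i \| G_k$ but might be blocked by the image $P_k(\supcnr_{j+k})$ of the other local supervisor. It is precisely this interaction between the two local projections on the shared coordinator alphabet that makes both technical hypotheses of the theorem, the synchronous nonconflictness of $\supcnr_{1+k}$ and $\supcnr_{2+k}$ and the controllability of $P_k(\supcnr_{1+k})\cap P_k(\supcnr_{2+k})$ with respect to $L(G_k)$, unavoidable, and they must be invoked jointly; by contrast, the second inclusion is a routine supremality argument that requires neither.
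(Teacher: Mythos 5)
Your two-inclusion plan is the standard route and is sound; note that the paper itself does not prove Theorem~\ref{thm2b} in the text but defers it to the technical report~\cite{relaxed}, and the analogous argument for the original framework (Theorem~\ref{thm4}, cf.~\cite{JDEDS}) has exactly the structure you propose: show that $\supcnr_{1+k}\parallel\supcnr_{2+k}$ is a relaxed conditionally controllable sublanguage of $K$, then show that every relaxed conditionally controllable sublanguage of $K$ is contained in it. Your supremality direction is complete and correct (one small remark: $M\subseteq P_{1+k}(M)\parallel P_{2+k}(M)$ holds for \emph{every} language, so conditional decomposability is not needed there; it is needed in the other direction, to get $\supcnr_{1+k}\parallel\supcnr_{2+k}\subseteq P_{1+k}(K)\parallel P_{2+k}(K)=K$).

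What you have not actually done is the controllability verification, which you yourself flag as the hard part; it does go through, but you should write it out, because it hides the one genuinely delicate point. Put $M=\supcnr_{1+k}\parallel\supcnr_{2+k}$. The projection distributivity lemma (Lemma~\ref{lemma:Wonham}, using $\Sigma_1\cap\Sigma_2\subseteq\Sigma_k$) gives both $P_{1+k}(M)=\supcnr_{1+k}\parallel P_k(\supcnr_{2+k})$ and $P_k(M)=P_k(\supcnr_{1+k})\cap P_k(\supcnr_{2+k})$, and synchronous nonconflictingness projects to $\overline{P_{1+k}(M)}=\overline{\supcnr_{1+k}}\parallel\overline{P_k(\supcnr_{2+k})}$. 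Now take $s\in\overline{P_{1+k}(M)}$, $u\in\Sigma_{1+k,u}$ with $su\in L(G_1\| G_k)$. Controllability of $\supcnr_{1+k}$ already gives $su\in\overline{\supcnr_{1+k}}$. If $u\notin\Sigma_k$, then $P_k(su)=P_k(s)\in\overline{P_k(\supcnr_{2+k})}$ and you are done without the coordinator hypothesis. If $u\in\Sigma_{k,u}$, you must show $P_k(s)u\in\overline{P_k(\supcnr_{2+k})}$, and the subtlety is that the hypothesis speaks about the \emph{intersection}: you need $P_k(s)\in\overline{P_k(\supcnr_{1+k})\cap P_k(\supcnr_{2+k})}$, which is in general strictly smaller than $\overline{P_k(\supcnr_{1+k})}\cap\overline{P_k(\supcnr_{2+k})}$. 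This is exactly where the identity $P_k(M)=P_k(\supcnr_{1+k})\cap P_k(\supcnr_{2+k})$ earns its keep: from $s\in P_{1+k}(\overline{M})$ you get $P_k(s)\in\overline{P_k(M)}$, then controllability of $P_k(M)$ with respect to $L(G_k)$ and $\Sigma_{k,u}$ gives $P_k(s)u\in\overline{P_k(M)}\subseteq\overline{P_k(\supcnr_{2+k})}$, hence $su\in\overline{\supcnr_{1+k}}\parallel\overline{P_k(\supcnr_{2+k})}=\overline{P_{1+k}(M)}$. With this case analysis spelled out (and the symmetric one for $i=2$), your proposal becomes a complete proof along the same lines as the referenced one.
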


  The question remains what happens if the intersection of the projections is not controllable as required in Theorem~\ref{thm2b}. We believed in the past that a solution to the coordination control problem (in terms of a conditionally controllable sublanguage) can only be computed as a product of languages in some special cases, where sufficient conditions such as those presented in~\cite{JDEDS} hold.  However, due to the presented relaxation, it becomes clear that such a solution can always be computed in a distributed way for prefix-closed specifications. Namely, it suffices to make the resulting language $P_k(\supcnr_{1+k}) \cap P_k(\supcnr_{2+k})$ controllable with respect to $L(G_k)$, as required in Theorem~\ref{thm2b}, by computing a supervisor for it as the following result suggests. 

  \begin{proposition}\label{prop2}
    Consider the setting of Problem~\ref{problem:relaxed} and the languages defined in~(\ref{eqCNr}). Let $\supcn'_k$ denote the language $\supcn(P_k(\supcnr_{1+k}) \cap P_k(\supcnr_{2+k}), L(G_k), \Sigma_{k,u})$. If the languages $\supcnr_{i+k}$ and $\supcn_{k}'$ are synchronously nonconflicting (e.g., prefix-closed) for $i=1,2$, then $\supcn'_k \parallel \supcnr_{1+k} \parallel \supcnr_{2+k}$ is a relaxed conditionally controllable sublanguage of $K$. \QED
  \end{proposition}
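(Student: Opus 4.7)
The plan is to establish two things about $M := \supcn'_k \parallel \supcnr_{1+k} \parallel \supcnr_{2+k}$: first, that $M\subseteq K$, and second, that $P_{i+k}(M)$ is controllable with respect to $L(G_i\| G_k)$ and $\Sigma_{i+k,u}$ for $i=1,2$. Together these yield that $M$ is a relaxed conditionally controllable sublanguage of $K$ in the sense of Definition~2.

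For the sublanguage inclusion, from $\supcnr_{i+k}\subseteq P_{i+k}(K)$ (built into the definitions in~(\ref{eqCNr})) and conditional decomposability of $K$, I immediately get $\supcnr_{1+k}\parallel \supcnr_{2+k}\subseteq P_{1+k}(K)\parallel P_{2+k}(K)=K$. Composing further with $\supcn'_k$, which lives over $\Sigma_k\subseteq \Sigma_1\cup\Sigma_2$, can only shrink the language, so $M\subseteq K$.

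For the controllability part, I would first simplify $P_{i+k}(M)$. The standing hypothesis $\Sigma_1\cap\Sigma_2\subseteq\Sigma_k$ forces $(\Sigma_j\cup\Sigma_k)\cap(\Sigma_i\cup\Sigma_k)=\Sigma_k$ for $j\neq i$, so $P_{i+k}(\supcnr_{j+k})=P_k(\supcnr_{j+k})$. Using the synchronous nonconflictingness assumption to push $P_{i+k}$ across the triple parallel composition, and invoking $\supcn'_k\subseteq P_k(\supcnr_{1+k})\cap P_k(\supcnr_{2+k})$ so that the induced intersection over $\Sigma_k$ collapses, I expect to obtain $P_{i+k}(M)=\supcnr_{i+k}\parallel \supcn'_k$. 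The controllability inclusion then reduces to a case analysis on an uncontrollable continuation $u\in\Sigma_{i+k,u}$ of a word $w\in\overline{\supcnr_{i+k}\parallel\supcn'_k}=\overline{\supcnr_{i+k}}\parallel\overline{\supcn'_k}$ (the equality by nonconflictingness): if $u\notin\Sigma_k$, only controllability of $\supcnr_{i+k}$ with respect to $L(G_i\| G_k)$ is needed, since the $P_k$-projection is unchanged by the extension; if $u\in\Sigma_{k,u}$, I additionally invoke controllability of $\supcn'_k$ with respect to $L(G_k)$, noting that $P_k(wu)=P_k(w)\,u\in L(G_k)$ follows from $wu\in L(G_i\| G_k)$ by projection.

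The main obstacle I foresee is the projection-over-parallel-composition step, which is exactly where the synchronous nonconflictingness hypothesis of the proposition is used. Once the alphabet bookkeeping has put $P_{i+k}(M)$ in the form $\supcnr_{i+k}\parallel\supcn'_k$, the controllability verification is routine from the two constituent controllabilities, but before that one must carefully check that the ``foreign'' factor $\supcnr_{j+k}$ indeed collapses into $\supcn'_k$ under $P_{i+k}$ and that the resulting binary composition still supports the split of prefix closures used in the uncontrollable-extension argument.
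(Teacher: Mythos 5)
Your argument is correct and is essentially the intended proof (the paper defers it to the technical report \cite{relaxed}): you show $M=\supcn'_k \parallel \supcnr_{1+k} \parallel \supcnr_{2+k}\subseteq K$ via conditional decomposability, reduce $P_{i+k}(M)$ to $\supcnr_{i+k}\parallel\supcn'_k$, and verify controllability with respect to $L(G_i\| G_k)$ and $\Sigma_{i+k,u}$ by the two-case analysis on whether the uncontrollable event lies in $\Sigma_k$, using controllability of $\supcnr_{i+k}$ and of $\supcn'_k$ (the latter with $P_k(wu)\in L(G_k)$ obtained by projection). One minor correction: distributing $P_{i+k}$ over the triple composition needs no nonconflictingness at all---it follows purely from $\Sigma_1\cap\Sigma_2\subseteq\Sigma_k$ (Lemma~\ref{lemma:Wonham}) together with $\supcn'_k\subseteq P_k(\supcnr_{j+k})$ for $j\neq i$; the synchronous nonconflictingness hypothesis is used exactly and only where you invoke it afterwards, namely to split $\overline{\supcnr_{i+k}\parallel\supcn'_k}=\overline{\supcnr_{i+k}}\parallel\overline{\supcn'_k}$ in the controllability check.
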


  Interestingly, we rediscover this way the role of a supervisor for the coordinator that is postponed to the end of the coordination control synthesis and it is used only when needed, i.e., when controllability of $P_k(\supcnr_{1+k}) \cap P_k(\supcnr_{2+k})$ with respect to $L(G_k)$ does not hold. Moreover, the supervisor $\supcn_k'$ can be computed in a distributed way, i.e., we can compute $\supcn'_k = \supcnr(P_k(\supcnr_{1+k}), L_k, \Sigma_{k,u}) \cap \supcn(P_k(\supcnr_{2+k}), L_k, \Sigma_{k,u})$. Their composition (here intersection) is then never computed and they operate locally in conjunction with local supervisors $\supcnr_{i+k}$, $i=1,2$.
  
  Compared to Theorem~\ref{thm4}, Proposition~\ref{prop2} only states that the result is a relaxed conditionally controllable sublanguage of the specification. We do not know whether the supremality also holds here. Nevertheless, we should point out that the languages $\supcnr_{i+k}$, $i=1,2$, actually form a solution of Problem~\ref{problem1} as discuss in Section~\ref{sec:comparison} below, even though their parallel composition is not necessarily relaxed conditionally controllable. From this point of view, to decrease the language by the composition with an additional coordinator, $\supcn_k'$, is not relevant for us. This demonstrates the already mentioned drawback of the (relaxed) coordination control framework---the restriction of the set of possible solutions, compared to the set of possible solutions for Problem~\ref{problem1}; see more details in Section~\ref{sec:comparison} below.

  To compare the solutions of Problem~\ref{problem1} and those of (relaxed) coordination control problem, we need the following.
  \begin{theorem}[Inclusion in the optimal solution]\label{thm_inc_opt_sol}
    Consider the setting of Problem~\ref{problem:relaxed} and the languages defined in~(\ref{supccn}), (\ref{eqCNr}). Then the language $\supccnr \subseteq \supcnr_{1+k} \parallel \supcnr_{2+k}$, i.e., the supremal relaxed conditionally controllable sublanguage is included in our distributed solution. If $\supcnr_{1+k}$ and $\supcnr_{2+k}$ are synchronously nonconflicting (e.g., prefix-closed), then the parallel composition is controllable, i.e., $\supcnr_{1+k} \parallel \supcnr_{2+k} \subseteq \supC(K, L(G_1\| G_2),\Sigma_u)$. \QED
  \end{theorem}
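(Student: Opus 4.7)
The plan is to establish the two inclusions in turn. For $\supccnr \subseteq \supcnr_{1+k}\parallel\supcnr_{2+k}$, let $M$ be any relaxed conditionally controllable sublanguage of $K$. By Definition~\ref{def:conditionalcontrollability}, $P_{i+k}(M)$ is controllable with respect to $L(G_i\| G_k)$ and $\Sigma_{i+k,u}$ and is contained in $P_{i+k}(K)$; hence the supremality of $\supcnr_{i+k}$ in~(\ref{eqCNr}) gives $P_{i+k}(M)\subseteq \supcnr_{i+k}$ for $i=1,2$. Any language is contained in the parallel composition of its own $P_{1+k}$- and $P_{2+k}$-images, so $M\subseteq P_{1+k}(M)\parallel P_{2+k}(M)\subseteq \supcnr_{1+k}\parallel\supcnr_{2+k}$, and taking the union over all such $M$ yields the desired inclusion, since $\supccnr$ equals the union of all relaxed conditionally controllable sublanguages of $K$.

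For the second claim, assume $\supcnr_{1+k}$ and $\supcnr_{2+k}$ are synchronously nonconflicting. The containment $\supcnr_{1+k}\parallel\supcnr_{2+k}\subseteq K$ follows from $\supcnr_{i+k}\subseteq P_{i+k}(K)$ combined with the conditional decomposability $K = P_{1+k}(K)\parallel P_{2+k}(K)$. It then suffices to verify that $\supcnr_{1+k}\parallel\supcnr_{2+k}$ is controllable with respect to $L(G_1\| G_2)$ and $\Sigma_u$, since this forces the composition into $\supC(K,L(G_1\| G_2),\Sigma_u)$ by supremality.

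For controllability, synchronous nonconflictness gives $\overline{\supcnr_{1+k}\parallel\supcnr_{2+k}} = \overline{\supcnr_{1+k}}\parallel\overline{\supcnr_{2+k}}$. I would take $s$ in this set and $\sigma\in\Sigma_u$ with $s\sigma\in L(G_1\| G_2)$, and verify $P_{i+k}(s\sigma)\in\overline{\supcnr_{i+k}}$ for each $i$. If $\sigma\notin\Sigma_i\cup\Sigma_k$, the projection erases $\sigma$ and the membership is inherited from $P_{i+k}(s)$. Otherwise $\sigma\in\Sigma_{i+k,u}$, and I would invoke the auxiliary inclusion $P_{i+k}(L(G_1\| G_2))\subseteq L(G_i\| G_k)$, which holds because $\Sigma_1\cap\Sigma_2\subseteq\Sigma_k$ and $G_k=P_k(G_1)\parallel P_k(G_2)$, to conclude $P_{i+k}(s)\sigma\in L(G_i\| G_k)$. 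Controllability of $\supcnr_{i+k}$ then delivers $P_{i+k}(s)\sigma\in\overline{\supcnr_{i+k}}$, and reassembling the two coordinates places $s\sigma$ back into the composition's prefix-closure.

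The main technical obstacle is this last step: transferring controllability of each local factor against its own plant $L(G_i\| G_k)$ to controllability of their parallel composition against the genuine global plant $L(G_1\| G_2)$. Synchronous nonconflictness is precisely what lets me exchange $\overline{\supcnr_{1+k}\parallel\supcnr_{2+k}}$ for $\overline{\supcnr_{1+k}}\parallel\overline{\supcnr_{2+k}}$, while the projection lemma above is what bridges the two plants; both ingredients are essential and neither is automatic.
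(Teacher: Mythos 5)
Your proof is correct, and it follows what is essentially the argument the paper relies on (the paper itself defers the proof of this theorem to the technical report~\cite{relaxed}): the first inclusion via supremality of $\supcnr_{i+k}$ together with $M\subseteq P_{1+k}(M)\parallel P_{2+k}(M)$, and the second via synchronous nonconflictingness plus lifting local controllability to the global plant. Your explicit appeal to $G_k=P_k(G_1)\parallel P_k(G_2)$ (so that $P_{i+k}(L(G_1\parallel G_2))\subseteq L(G_i\parallel G_k)$) is exactly the point that makes the comparison against $\supC(K,L(G_1\parallel G_2),\Sigma_u)$ legitimate, and it matches the paper's standing convention on the coordinator; your event-wise case analysis is just an unfolded version of the standard lemma that a nonconflicting composition of locally controllable languages is controllable with respect to the composed plant.
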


  We now discuss conditions that ensures optimality. The proofs are similar to those in~\cite{JDEDS}. To this end, we use the notions of output control consistency (OCC), cf.~\cite{WZ91}, or local control consistency (LCC), cf.~\cite{SB11,SB08}. 
  \begin{theorem}[Optimality conditions]\label{thm_opt_cond}
    Consider the setting of Problem~\ref{problem:relaxed} and the languages defined in~(\ref{eqCNr}). If the projection $P_{i+k}$ is an $L(G_1\| G_2)$-observer and OCC (LCC) for $L(G_1\| G_2)$, $i=1,2$, then the parallel composition of the supervisors contains the optimal solution, i.e., $\supC(K, L(G_1\|G_2), \Sigma_{u}) \subseteq \supCr_{1+k} \parallel \supCr_{2+k}$. \QED
  \end{theorem}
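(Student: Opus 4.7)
Let $M = \supC(K, L(G_1\|G_2), \Sigma_u)$ denote the monolithic optimal solution. The plan is to push $M$ through each projection $P_{i+k}$, show that the result falls inside the local $\supCr_{i+k}$, and then reassemble the global inclusion from the fact that every language sits inside the synchronous product of its own projections, so that $M \subseteq P_{1+k}(M)\parallel P_{2+k}(M)$ holds trivially. Since $M \subseteq K$ and $K$ is conditionally decomposable, $P_{i+k}(M)\subseteq P_{i+k}(K)$ is immediate, so by the maximality defining $\supCr_{i+k}$ it is enough to show that $P_{i+k}(M)$ is controllable with respect to $L(G_i\|G_k)$ and $\Sigma_{i+k,u}$.

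The main work is this controllability, and it is the step that consumes the observer and OCC (or LCC) hypotheses. I would invoke the classical projection-preserves-controllability lemma (see for instance \cite{WZ91,SB08,SB11}): if a projection $P$ is an $L(G)$-observer and OCC, or LCC, for $L(G)$, then controllability of $N\subseteq \overline{L(G)}$ with respect to $L(G)$ and $\Sigma_u$ implies controllability of $P(N)$ with respect to $P(L(G))$ and $P(\Sigma_u)$. Applied to $P = P_{i+k}$, $G = G_1\|G_2$, and $N = M$, this gives controllability of $P_{i+k}(M)$ with respect to $P_{i+k}(L(G_1\|G_2))$ and $\Sigma_{i+k,u}$.

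It then remains to identify the projected monolithic plant with the local coordinated plant. Under the observer hypothesis, the projection distributes over the synchronous product, so $P_{i+k}(L(G_1)\parallel L(G_2)) = P_{i+k}(L(G_1))\parallel P_{i+k}(L(G_2)) = L(G_i)\parallel P_k(L(G_j))$ for $j\neq i$, using that $\Sigma_1\cap\Sigma_2\subseteq\Sigma_k$ implies $P_{i+k}$ restricted to $\Sigma_j^*$ coincides with $P_k$. On the other hand, by construction $L(G_k) = P_k(L(G_1))\parallel P_k(L(G_2))$, and since $L(G_i)\parallel P_k(L(G_i)) = L(G_i)$, one obtains $L(G_i)\parallel L(G_k) = L(G_i)\parallel P_k(L(G_j))$, giving the desired equality $P_{i+k}(L(G_1\|G_2)) = L(G_i\|G_k)$.

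Putting everything together, $P_{i+k}(M)\subseteq \supCr_{i+k}$ for $i=1,2$, and hence $M \subseteq P_{1+k}(M)\parallel P_{2+k}(M) \subseteq \supCr_{1+k}\parallel\supCr_{2+k}$, as required. The hard part is the controllability-preservation lemma behind the observer plus OCC (or LCC) assumption; the rest is routine manipulation of natural projections, synchronous products, and the conditional decomposability of $K$. The argument follows the template of the optimality proof in \cite{JDEDS}, as the authors indicate.
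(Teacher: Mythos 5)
Your proof is correct and follows essentially the route the paper intends (it defers the proof to the technical report~\cite{relaxed} and to the analogous argument in~\cite{JDEDS}): project the monolithic supremal controllable language $M$, use the observer and OCC/LCC hypotheses to preserve controllability of $P_{i+k}(M)$ with respect to the projected plant, conclude $P_{i+k}(M)\subseteq \supCr_{i+k}$ by supremality, and reassemble via $M\subseteq P_{1+k}(M)\parallel P_{2+k}(M)$. One small correction: the identity $P_{i+k}(L(G_1)\parallel L(G_2))=P_{i+k}(L(G_1))\parallel P_{i+k}(L(G_2))$, and hence $P_{i+k}(L(G_1\|G_2))=L(G_i\|G_k)$ for the coordinator $G_k=P_k(G_1)\parallel P_k(G_2)$ assumed throughout the paper, is a consequence of $\Sigma_1\cap\Sigma_2\subseteq\Sigma_k\subseteq\Sigma_i\cup\Sigma_k$ (Lemma~\ref{lemma:Wonham}), not of the observer property, which is needed only (together with OCC or LCC) in the controllability-preservation step.
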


  Note that we assume that $P_{i+k}$ is an $L(G_1\|G_2)$-observer and OCC (LCC) for $L(G_1\|G_2)$. However, we do not want to compute the language $L(G_1\|G_2)$. Instead, we can assume that (i) $P^{i+k}_k$ is an $(P^{i+k}_i)^{-1}(L(G_i))$-observer and (ii) $P_{i+k}$ is OCC (LCC) for the language $P_{i+k}^{-1}(L(G_i\| G_k))$. These can be ensured by an appropriate extension of the alphabet $\Sigma_k$, cf.~\cite{pcl08,pcl12} and \cite{SB11}, respectively.

\section{Comparison with the relaxed framework}\label{sec:comparison}
  Here we highlight the difference between Problem~\ref{problem1} and the relaxed coordination control problem. On one hand, we show the drawback of relaxed coordination control that prevents it from fully solving Problem~\ref{problem1}. On the other hand, we appreciate it since it can be used to find a solution. We now point out the main difference between the two problems.
  
  Let $L=L(G_1\| G_2)$ be a plant, and let $K\subseteq L$ denote a specification as in Problem~\ref{problem1}. Consider the computation of supervisors $\supcnr_{1+k}$ and $\supcnr_{2+k}$ as defined in (\ref{eqCNr}). For simplicity, we assume that the languages under consideration are prefix-closed. Assume that the parallel composition $\supcnr_{1+k} \parallel \supcnr_{2+k}$ is not relaxed conditionally controllable. This is not an unrealistic assumption. Then these two languages do not form a solution in the relaxed coordination control framework. However, note that (a generator for) $\supcnr_{i+k}$ is a supervisor for the plant $G_i\| G_k$, since $\supcnr_{i+k}$ is controllable with respect to $G_i\| G_k$ by definition. 

  Let $K' = \supcnr_{1+k} \parallel \supcnr_{2+k}$. Then $K'$ is conditionally decomposable and controllable with respect to the original plant, and
  \[
    L_m(\supcnr_{1+k}/(G_1\|G_k)) \| L_m(\supcnr_{2+k}/(G_1\|G_k)) = K'
  \]
  which means that $\supcnr_{i+k}$, $i=1,2$, is a solution of Problem~\ref{problem1} that cannot be achieved in the relaxed coordination control framework, since $K'$ is not relaxed conditionally controllable by definition. (Here we slightly abuse the notation and use $\supcnr_{i+k}$ to denote both the generator and the language of that generator, depending on the context.) 
  
  The inclusion 
  $
    L_m(\supcnr_{1+k} / (G_1\| G_k)) \subseteq P_{1+k}(K')
  $
  is required in the definition of the relaxed coordination control problem. But it does not hold in general, therefore we do not assume the inclusion to be satisfied in Problem~\ref{problem1}. This is the main difference between Problem~\ref{problem1} and the (relaxed) coordination control problem. 

  The following example shows that there exist solutions that cannot be achieved in the relaxed framework.
  \begin{example}
    Let $G_1$ and $G_2$ be plants as shown in Fig.~\ref{figexl1plants}, and let $K$ denote the specification with the generator depicted in Fig.~\ref{figexl1spec}.
    \begin{figure}
      \centering
      \includegraphics[scale=.4]{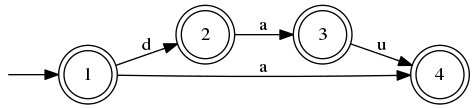}
      \qquad
      \includegraphics[scale=.4]{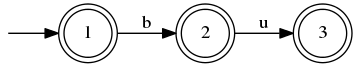}
      \caption{Plants $G_1$ and $G_2$}
      \label{figexl1plants}
    \end{figure}
    \begin{figure}
      \centering
      \includegraphics[scale=.4]{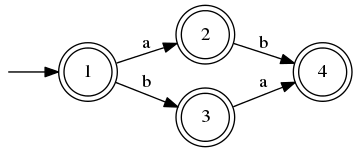}
      \caption{The specification}
      \label{figexl1spec}
    \end{figure}
    Let $E_k = \{u\}$. Notice that $K$ is conditionally decomposable. The coordinator $G_k$ is depicted in Fig.~\ref{figexl1coord}.
    \begin{figure}
      \centering
      \includegraphics[scale=.4]{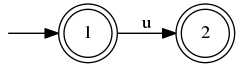}
      \caption{The coordinator $G_k$}
      \label{figexl1coord}
    \end{figure}
    Then
    $P_{1+k}(K) = \overline{\{a\}}$ and
    $P_{2+k}(K) = \overline{\{b\}}$ and it is not hard to see that the language
    $P_{2+k}(K)$ is not controllable with respect to $G_1 \| G_k = G_1$, hence
    $K$ is not relaxed conditionally controllable. 
    If we use the relaxed coordination control framework to compute supervisors, we obtain $\supC_{1+k}$ and $\supC_{2+k}$ depicted in Fig.~\ref{figexl1ccsups}.
    \begin{figure}
      \centering
      \includegraphics[scale=.4]{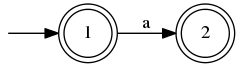}
      \qquad
      \includegraphics[scale=.4]{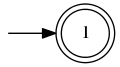}
      \caption{The supervisors $\supC_{1+k}$ and $\supC_{2+k}$ computed in the relaxed coordination control framework}
      \label{figexl1ccsups}
    \end{figure}
    This is not the optimal solution, i.e., their parallel composition is not equal to $K$.
    On the other hand, we can obtain the specification if we consider infimal superlanguages of $P_{i+k}(K)$, namely for $i=2$. Then we obtain the supervisors depicted in Fig.~\ref{figexl1infopt} that form the optimal solution.
    \begin{figure}
      \centering
      \includegraphics[scale=.4]{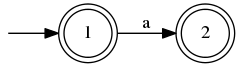}
      \qquad
      \includegraphics[scale=.4]{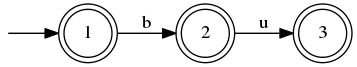}
      \caption{The optimal supervisors $S_1$ and $S_2$ for the problem obtained by considering infimal superlanguages}
      \label{figexl1infopt}
    \end{figure} \QEDopen
  \end{example}

  However, if the solution of Problem~\ref{problem1} does not exist, the relaxed coordination control framework allows us to compute a sublanguage of the specification for which a solution exists. The other advantage of this is that we can compare the obtained solution with the supremal relaxed conditionally controllable sublanguage of the specification; see below.

\section{Application of the relaxed framework}\label{sec:app}
  Consider the setting of Problem~\ref{problem1}. Theorem~\ref{th:controlsynthesissafety} says that if $K$ is relaxed conditionally controllable, then there exist nonblocking and nonconflicting supervisors $S_{1}$ and $S_{2}$ such that 
    $
      L_m(S_{1}/[G_1 \| G_k]) \parallel L_m(S_{2}/[G_2 \| G_k]) = K\,.
    $ 
  Recall the languages $\supcnr_{1+k}$ and $\supcnr_{2+k}$ defined above in (\ref{eqCNr}). By definition, we immediately have that
  \[
    \supcnr_{1+k} \parallel \supcnr_{2+k} \subseteq P_{1+k}(K) \parallel P_{2+k}(K) = K
  \]
  i.e., it is potentially a solution of Problem~\ref{problem1}. We now show that it is actually a solution and compare it with the optimal solution of the relaxed framework. The following is basically Theorem~\ref{thm_inc_opt_sol}.

  \begin{theorem}\label{optimality}
    Consider the setting of Problem~\ref{problem1} and the languages defined in~(\ref{supccn}), (\ref{eqCNr}). If the supervisors $\supcnr_{1+k}$ and $\supcnr_{2+k}$ are synchronously nonconflicting (e.g., prefix-closed), then their parallel composition is controllable. \QED
  \end{theorem}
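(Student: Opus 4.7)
The plan is to derive Theorem~\ref{optimality} from the second assertion of Theorem~\ref{thm_inc_opt_sol}, which is literally the same statement once one observes that the setting of Problem~\ref{problem1} is the specialization of Problem~\ref{problem:relaxed} to the coordinator $G_k = P_k(G_1)\parallel P_k(G_2)$, for which $L(G_1\| G_2\| G_k) = L(G_1\| G_2)$. For completeness, I would outline the direct argument, which hinges on the standard fact that the parallel composition of two synchronously nonconflicting, locally controllable languages is controllable with respect to the parallel composition of the respective plants and the union of the uncontrollable alphabets.

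By definition in~(\ref{eqCNr}), $\supcnr_{i+k}$ is controllable with respect to $L(G_i\| G_k)$ and $\Sigma_{i+k,u}=(\Sigma_i\cup\Sigma_k)\cap\Sigma_u$ for $i=1,2$. The synchronously nonconflicting hypothesis gives
\[
  \overline{\supcnr_{1+k} \parallel \supcnr_{2+k}} \;=\; \overline{\supcnr_{1+k}} \parallel \overline{\supcnr_{2+k}}\,.
\]
Let $w \in \overline{\supcnr_{1+k}\parallel\supcnr_{2+k}}$ and $\sigma\in\Sigma_u$ be such that $w\sigma\in L(G_1\| G_2)$. Using the inverse-image characterization of parallel composition, it suffices to show that $P_{i+k}(w\sigma)\in\overline{\supcnr_{i+k}}$ for $i=1,2$. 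If $\sigma\notin\Sigma_i\cup\Sigma_k$, then $P_{i+k}(w\sigma)=P_{i+k}(w)\in\overline{\supcnr_{i+k}}$ by assumption. Otherwise $\sigma\in\Sigma_{i+k,u}$, and the local controllability of $\supcnr_{i+k}$ in $L(G_i\| G_k)$, together with $P_{i+k}(w\sigma)\in L(G_i\| G_k)$, yields $P_{i+k}(w\sigma)\in\overline{\supcnr_{i+k}}$. Gluing via the parallel-composition characterization then places $w\sigma$ in $\overline{\supcnr_{1+k}}\parallel\overline{\supcnr_{2+k}}=\overline{\supcnr_{1+k}\parallel\supcnr_{2+k}}$.

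The main obstacle is the case analysis on the visibility of $\sigma$: an uncontrollable event in $\Sigma_i\setminus\Sigma_k$ must be accommodated by the single local supervisor that sees it, while a shared uncontrollable event in $\Sigma_k$ must be accommodated by both, and the nonconflict property is what allows the two extensions to be recombined. The auxiliary fact $P_{i+k}(w\sigma)\in L(G_i\| G_k)$ used above is immediate from $G_k = P_k(G_1)\parallel P_k(G_2)$, which gives $L(G_1\| G_2) = L(G_1\| G_2\| G_k)$ and hence that its projection to $(\Sigma_i\cup\Sigma_k)^*$ is contained in $L(G_i\| G_k)$. Finally, combining the controllability just established with $\supcnr_{1+k}\parallel\supcnr_{2+k} \subseteq P_{1+k}(K)\parallel P_{2+k}(K) = K$ (by conditional decomposability of $K$) and the supremality of $\supC$ gives the inclusion $\supcnr_{1+k}\parallel\supcnr_{2+k} \subseteq \supC(K, L(G_1\| G_2), \Sigma_u)$ referenced in Theorem~\ref{thm_inc_opt_sol}.
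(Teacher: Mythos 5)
Your proposal is correct and takes essentially the same route as the paper: the paper justifies Theorem~\ref{optimality} precisely by observing it is the second assertion of Theorem~\ref{thm_inc_opt_sol} specialized to the coordinator $G_k=P_k(G_1)\parallel P_k(G_2)$ (the proof of that theorem being deferred to~\cite{relaxed}). Your self-contained argument---the case analysis on whether $\sigma\in\Sigma_i\cup\Sigma_k$, the use of synchronous nonconflictingness to recombine the local extensions, and the observation that $P_{i+k}(w\sigma)\in L(G_i\| G_k)$ because $L(G_1\|G_2)=L(G_1\|G_2\|G_k)$---is the standard proof of that assertion and is sound.
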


  If the languages (supervisors) in the previous theorem are conflicting, we still have a solution for the global plant $G_1\| G_2$, namely, the language $M=trim(S_1\| S_2)$, where $trim$ denotes the nonblocking generator of $S_1\| S_2$. Then $M$ is a supervisor such that $L_m(M/G) = M$ and $L(\overline{M}/G) = \overline{M}$, so it can be considered as a solution of the monolithic case with the supervisor $M$.
  
  However, our aim is not to compute the parallel composition of supervisors $S_1$ and $S_2$ to obtain a single huge supervisor, but rather to distribute the supervision to local plants. Taking a look at the previous theorem, we can notice that $L_m(S_1/G_1\|G_k) \parallel L_m(S_2/G_2\|G_k) = L_m(S_1\| S_2)$. However, if the supervisors $S_1$ and $S_2$ are conflicting, we only have that $L(S_1/G_1\|G_k) \parallel L(S_2/G_2\|G_k) \supsetneq \overline{L_m(S_1/G_1\|G_k) \parallel L_m(S_2/G_2\|G_k)} = \overline{L_m(S_2\| S_2)}$, i.e., the overall supervised closed-loop system is blocking. To solve nonblockingness here, we can use the language
  \begin{align}\label{eq2}
    L_C=\supcn(P_0(\supcnr_{1+k})\parallel P_0(\supcnr_{2+k}), 
      \overline{P_0(\supcnr_{1+k})}\parallel \overline{P_0(\supcnr_{2+k})},\ \Sigma_{0,u})
  \end{align}
  where the projection $P_0$ is a $\supcn_{i+k}$-observer, $i=1,2$, which serves as a coordinator for nonconflictingness. The following can be proved similarly as in~\cite{JDEDS}, based on the results of~\cite{FLT}.
  
  \begin{theorem}[\cite{JDEDS}]\label{thm22}
    Consider the notation of Problem~\ref{problem1} and the languages defined in~(\ref{eqCNr}) and~(\ref{eq2}). Then the language
    $
      \overline{\supcn_{1+k}  \parallel \supcn_{2+k}\parallel L_C} = 
      \overline{\supcn_{1+k}} \parallel \overline{\supcn_{2+k}} \parallel \overline{L_C}
    $
    is controllable with respect to the plant $G_1 \| G_2$. \QED
  \end{theorem}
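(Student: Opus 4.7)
My plan is to establish the two assertions of the theorem separately: first the nonconflictness (the displayed equality of prefix closures with the parallel composition), and then controllability with respect to $L(G_1\|G_2)$.

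For the equality, I would invoke the observer-based nonconflictness result from~\cite{FLT}. By construction, $P_0$ is a $\supcnr_{i+k}$-observer for $i=1,2$, so $P_0(\supcnr_{1+k})$ and $P_0(\supcnr_{2+k})$ share the synchronization structure needed to guarantee that $\supcnr_{1+k}$, $\supcnr_{2+k}$, and $L_C$ are pairwise synchronously nonconflicting. The coordinator $L_C$ is by definition a controllable sublanguage of $\overline{P_0(\supcnr_{1+k})}\|\overline{P_0(\supcnr_{2+k})}$, so its prefix closure factors through the composition of the two local closures. Combining these, the prefix closure of the triple composition splits into the composition of the prefix closures, which is exactly the displayed equality.

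For controllability, I would argue by transitivity and stability of controllability under parallel composition. By definition of the supremal controllable sublanguage in~(\ref{eqCNr}), $\supcnr_{i+k}$ is controllable with respect to $L(G_i\|G_k)$ and $\Sigma_{i+k,u}$ for $i=1,2$. Projecting to $\Sigma_0$ and using the fact that $L_C$ is by construction controllable with respect to $\overline{P_0(\supcnr_{1+k})}\|\overline{P_0(\supcnr_{2+k})}$ and $\Sigma_{0,u}$, I would then apply the standard lemma that the parallel composition of controllable languages over appropriately chosen alphabets is controllable with respect to the parallel composition of their plants. Since $L(G_1\|G_k)\parallel L(G_2\|G_k) = L(G_1\|G_2\|G_k)$, which projects down to $L(G_1\|G_2)$ as desired (using $\Sigma_k\subseteq\Sigma_1\cup\Sigma_2$ and the construction $G_k=P_k(G_1)\|P_k(G_2)$, so that $G_k$ imposes no additional restriction on $L(G_1\|G_2)$), the claimed controllability follows.

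The main obstacle I expect is ensuring that the coordinator $L_C$ interacts cleanly with the two local supervisors. The observer property of $P_0$ is exactly what prevents a controllable event from being disabled in $L_C$ while still enabled in one of the local supervisors at a compatible state, and it is also what guarantees that no deadlock is introduced when composing the three languages. Without the observer assumption, nonconflictness can fail and controllability may not propagate through the composition, so the careful verification of the observer hypothesis (and the fact that $P_0$ can always be made an observer by extending its alphabet via~\cite{FengWonham}) is the delicate step. The remainder is routine, following the scheme of~\cite{JDEDS}.
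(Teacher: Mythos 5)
Your overall route---the observer-based nonconflictingness machinery of~\cite{FLT} for the displayed equality, plus standard controllability lemmas for the second claim---is indeed the intended one (the paper gives no proof of this theorem and defers to~\cite{JDEDS} and~\cite{FLT}), but two of your central steps do not hold as you state them. First, \emph{pairwise} synchronous nonconflictingness of $\supcn_{1+k}$, $\supcn_{2+k}$ and $L_C$ is neither what the observer property directly gives you nor sufficient: for three languages, pairwise nonconflictingness does not imply that the prefix closure of the triple composition equals the composition of the prefix closures. What is actually needed is the coordinator-for-nonconflictingness result of~\cite{FLT} in the specific form used in~\cite{JDEDS}: since $P_0$ is a $\supcn_{i+k}$-observer for $i=1,2$ and $(\Sigma_1\cup\Sigma_k)\cap(\Sigma_2\cup\Sigma_k)=\Sigma_k\subseteq\Sigma_0$, a word $w\in \overline{\supcn_{1+k}}\parallel\overline{\supcn_{2+k}}\parallel\overline{L_C}$ has $P_0(w)$ extendable within $L_C\subseteq P_0(\supcn_{1+k})\parallel P_0(\supcn_{2+k})$, and the two observer properties are then used to lift this \emph{single} abstract extension \emph{simultaneously} to marked extensions of the $P_{1+k}$- and $P_{2+k}$-components of $w$; this simultaneous lifting is the substance of the argument, and your sketch essentially restates the conclusion instead of performing it.

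Second, the controllability part cannot be obtained by directly applying ``a composition of controllable languages is controllable with respect to the composition of their plants'': the plant of $L_C$ is the abstraction $\overline{P_0(\supcn_{1+k})}\parallel\overline{P_0(\supcn_{2+k})}$ of the already supervised behaviours, not a component of the physical plant, so the composition of the three plants is in general a proper sublanguage of $L(G_1\| G_2)$ and the lemma would only give controllability with respect to that smaller plant. The correct chain is: (i) $\overline{\supcn_{1+k}}\parallel\overline{\supcn_{2+k}}$ is controllable with respect to $L(G_1\| G_2\| G_k)=L(G_1\| G_2)$ (your observation that $G_k=P_k(G_1)\parallel P_k(G_2)$ imposes no restriction is used here as an equality of languages, not as a projection); (ii) since $\overline{L_C}$ is controllable with respect to $\overline{P_0(\supcn_{1+k})}\parallel\overline{P_0(\supcn_{2+k})}$ and $\Sigma_{0,u}$, and $P_0\bigl(\overline{\supcn_{1+k}}\parallel\overline{\supcn_{2+k}}\bigr)\subseteq \overline{P_0(\supcn_{1+k})}\parallel\overline{P_0(\supcn_{2+k})}$, lifting through $P_0^{-1}$ (uncontrollable events outside $\Sigma_0$ are harmless) shows that $\overline{\supcn_{1+k}}\parallel\overline{\supcn_{2+k}}\parallel\overline{L_C}$ is controllable with respect to the intermediate plant $\overline{\supcn_{1+k}}\parallel\overline{\supcn_{2+k}}$; (iii) transitivity of controllability for prefix-closed languages then yields controllability with respect to $L(G_1\| G_2)$. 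Note also that the observer property plays no role in this controllability part; your closing remark that it ``prevents a controllable event from being disabled in $L_C$ while still enabled in one of the local supervisors'' conflates nonblockingness with controllability.
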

  
  We can now summarize the method as an algorithm.
  \begin{algorithm}[Solving Problem~\ref{problem1}]\label{alg}
    Consider the above.
    \begin{enumerate}
      \item Check whether Problem~\ref{problem1} has a solution using Proposition~\ref{lem1}. If so, stop; otherwise, continue.
      \item Compute $\supcnr_{1+k}$ and $\supcnr_{2+k}$ as defined in (\ref{eqCNr}).
      \item Let $\Sigma_0:=\Sigma_k$ and $P_0:=P_k$.
      \item Extend the alphabet $\Sigma_0$ so that the projection $P_0$ is both a $\supcnr_{1+k}$- and a $\supcnr_{2+k}$-observer.
      \item Define the coordinator $C$ as the minimal nonblocking generator such that $L_m(C) = L_C$ from~(\ref{eq2}).
    \end{enumerate}
  \end{algorithm}
  See more comments on this algorithm in~\cite{JDEDS}.

\section{MRI Scanner}\label{sec:MRI}
  To demonstrate our approach on an industrial example, we consider the model and specification of an MRI scanner presented by Theunissen~\cite{theunissen}. The plant consists of four parts
  \[
    \text{VAxis}  \parallel  \text{HAxis} \parallel  \text{HVNormal}  \parallel \text{UI}
  \]
  where each part is again a composition of several smaller parts. However, we do not go into these details and consider these parts as the four subsystems that form the plant. The specification consists of the parts
  \[
    \text{VReq}  \parallel  \text{HReq}  \parallel  \text{HVReq}  \parallel \text{UIReq}
  \]
  which do not exactly correspond the the four parts of the plant. For all computations, we used the C++ library \texttt{libFAUDES}~\cite{libfaudes}. We now observe the following.

  (1) VReq is a specification that concerns only the plant VAxis. This is a simple case with one plant, hence a monolithic approach was used. The specification has 12 states and 44 transitions, the plant has 15 states and 50 transitions, and the computed supervisor has 15 states and 36 transitions.

  (2) Similarly, HReq is a specification that concerns only the plant HAxis. Again, the monolithic supervisor was computed. The specification has 112 states and 736 transitions, the plant has 128 states and 1002 transitions, and the computed supervisor consists of 80 states and 320 transitions.

  (3) The specification HVReq concerns the plant $\text{VAxis} \parallel \text{HAxis} \parallel \text{HVNormal}$. The specification consists of 7 states and 35 transitions, the plant HVNormal consists of 1 state and 1 transition (the other two parts of the plant are mentioned above). We computed a coordinator consisting of 160 states and 1287 transitions and three supervisors with 516, 1132 and 283 states and 3395, 10298 and 1692 transitions, respectively. It was verified that their parallel composition is nonblocking and corresponds to the supremal controllable sublanguage of the specification HVReq with respect to the plant $\text{VAxis}\parallel \text{HAxis} \parallel \text{HVNormal}$.
    
  (4) The specification UIReq concerns the whole plant $\text{VAxis} \parallel \text{HAxis} \parallel \text{HVNormal} \parallel \text{UI}$. Therefore it is the most interesting part to demonstrate our approach. Let us mention that UIReq consists of 256 states and 2336 transitions, and UI consists of 2 states and 15 transitions (the rest is described above). We computed a coordinator with 4 states and 30 transitions, and four supervisors with 432, 768, 12, and 96 states and 3488, 6652, 74 and 808 transitions, respectively. 

  The overall minimal monolithic supervisor for the whole system would consist of 68672 states and 616000 transitions. We have verified that our solution is an optimal solution to Problem~\ref{problem1}. We have verified that it forms a solution in both coordination control architectures discussed in the paper. Due to the modeling skills of the modeler and the nature of the model, there were no problems with blockingness at any step of the computation. All the computed supervisors are nonblocking and nonconflicting, thus no coordinator for nonblockingness was needed.

\section{Conclusion and further discussion} \label{sec:conclusion}
  The results are formulated for general, non-prefix-closed languages, for which synchronous nonconflictingness is required. One could find it an issue, but these assumptions are trivially satisfied for, e.g., prefix-closed languages. To verify whether a synchronous product (of an unspecified number) of generators is synchronously nonconflicting is PSPACE-complete~\cite{rohloff}. It is only the worst case. Some optimization techniques exist, e.g.~\cite{FlordalM2006}, or a maximal nonconflicting sublanguage can be computed~\cite{Chen1991105}. The good news of PSPACE-completeness is that it is computable in polynomial space.

  All concepts and results can be extended from the generic case $n=2$ in a straightforward manner. However, with an increasing number of components it is likely that the coordinator will, in some situations, grow, e.g., many events will have to be included into the coordinator alphabet to make the global specification conditionally decomposable. Therefore we have recently proposed a multilevel coordination control architecture~\cite{cdc2013}. The approach of this paper can easily be implemented in such a multilevel structure.

  A challenging and important problem is to find a convenient alphabet $\Sigma_k$, from which the coordinator is computed.  A natural step is to take a minimal such alphabet. There are two issues with this choice. First, to compute the minimal $\Sigma_k$ is NP-hard, second, there exist examples showing evidence that the use of a minimal alphabet results in no solutions (in empty supervisors). This means that we need to find a larger alphabet for which a solution can be obtained. This is always possible, since $\Sigma_k$ can be taken as the global alphabet. However, the aim is to characterize a reasonable choice of the alphabet that would provide nonempty solutions.
  
  In the future, we plan to take into account communication delays and losses and to consider more complicated forms of communications among local controllers.

\section*{Acknowledgment}
  The research was supported by RVO 67985840, by M\v{S}MT in project MUSIC (grant LH13012), by GA{\v C}R in project GA15-02532S and by the DFG in project DIAMOND (Emmy Noether grant KR~4381/1-1).

\clearpage
\appendix
Here we provide the proofs of Lemma~\ref{thm:char}, Proposition~\ref{lem1} and Theorem~\ref{thmMain}.
The results concerning the relaxed coordination control framework can be found in the technical report~\cite{relaxed}.

  \begin{proofof}{Lemma~\ref{thm:char}}
    Consider the setting of Problem~\ref{problem1}. There exist nonblocking and nonconflicting supervisors $S_{1}$ and $S_{2}$ such that the distributed closed-loop system satisfies 
    $
      L_m(S_{1}/[G_1 \| G_k]) \| L_m(S_{2}/[G_2 \| G_k]) = K
    $ 
    if and only if
      $L_m(S_1/(G_1\| G_k)) \| P_k(S_2) = P_{1+k}(K)$,
      $L_m(S_2/(G_2\| G_k)) \| P_k(S_1) = P_{2+k}(K)$, and
      $S_1$ and $S_2$ are nonblocking and nonconflicting supervisors with respect to $G_1\| G_k$ and $G_2\| G_k$, respectively.
  \end{proofof}
  
  \begin{proof}
    (If) From the assumptions we obtain that 
    \begin{align*}
      K & = P_{1+k}(K) \| P_{2+k}(K)  \\
        & = L_m(S_1/(G_1\| G_k)) \| P_k(S_2) \| L_m(S_2/(G_2\| G_k)) \| P_k(S_1)\\
        & = L_m(S_1/(G_1\| G_k)) \| P_k(S_1) \| L_m(S_2/(G_2\| G_k)) \| P_k(S_2)\\
        & = L_m(S_1/(G_1\| G_k)) \| L_m(S_2/(G_2\| G_k))
    \end{align*}
    and similarly for
    \begin{align*}
      \overline{K} & = \overline{P_{1+k}(K)} \| \overline{P_{2+k}(K)}\\
        & = \overline{L_m(S_1/(G_1\| G_k)) \| P_k(S_2)} \| \overline{L_m(S_2/(G_2\| G_k)) \| P_k(S_1)}\\
        & = L(S_1/(G_1\| G_k)) \| \overline{P_k(S_1)} \| L(S_2/(G_2\| G_k)) \| \overline{P_k(S_2)}\\
        & = L(S_1/(G_1\| G_k)) \| L(S_2/(G_2\| G_k)).
    \end{align*}
    
    (Only if) By the application of projection $P_{1+k}$ to $L_m(S_{1}/[G_1 \| G_k]) \| L_m(S_{2}/[G_2 \| G_k]) = K$, we obtain that $P_{1+k}(K) = L_m(S_1/(G_1\| G_k)) \| P_k(S_2)$, cf. Lemma~\ref{lemma:Wonham} below. Similarly for $P_{2+k}(K)$.
  \end{proof}
  
  \medskip
  \begin{lemma}[\cite{Won12}]\label{lemma:Wonham}
    Let $P_k : \Sigma^*\to \Sigma_k^*$ be a projection, and let $L_i \subseteq \Sigma_i^*$, where $\Sigma_i\subseteq \Sigma$, for $i=1,2$, and $\Sigma_1\cap \Sigma_2 \subseteq \Sigma_k$. Then $P_k(L_1\| L_2)=P_k(L_1) \| P_k(L_2)$. 
  \end{lemma}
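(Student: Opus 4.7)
The plan is to verify both inclusions directly from the definitions. Let $P_i:(\Sigma_1\cup\Sigma_2)^*\to\Sigma_i^*$ denote the natural projections, so that $L_1\|L_2 = P_1^{-1}(L_1)\cap P_2^{-1}(L_2)$, and let $Q_i:\Sigma_k^*\to(\Sigma_k\cap\Sigma_i)^*$ be the natural projections on the coordinator alphabet. Since $L_i\subseteq\Sigma_i^*$ forces $P_k(L_i)\subseteq(\Sigma_k\cap\Sigma_i)^*$, the right-hand side unfolds as $P_k(L_1)\|P_k(L_2) = Q_1^{-1}(P_k(L_1))\cap Q_2^{-1}(P_k(L_2))$, a language over $(\Sigma_k\cap\Sigma_1)\cup(\Sigma_k\cap\Sigma_2)\subseteq\Sigma_k$. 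The identity underlying everything is $Q_i\circ P_k = R_i\circ P_i$, where $R_i:\Sigma_i^*\to(\Sigma_k\cap\Sigma_i)^*$ is the natural projection: both compositions simply delete all letters outside $\Sigma_k\cap\Sigma_i$.

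For the inclusion $P_k(L_1\|L_2)\subseteq P_k(L_1)\|P_k(L_2)$, I would take $w\in L_1\|L_2$ and set $t:=P_k(w)$. Since $P_i(w)\in L_i$, the identity above gives $Q_i(t)=R_i(P_i(w))\in R_i(L_i)$; and because $L_i\subseteq\Sigma_i^*$, the projections $R_i$ and $P_k$ agree on $L_i$, so $R_i(L_i)=P_k(L_i)$. Hence $Q_i(t)\in P_k(L_i)$ for both $i$, i.e., $t\in P_k(L_1)\|P_k(L_2)$.

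For the opposite inclusion, I would take $t\in P_k(L_1)\|P_k(L_2)$, so $Q_i(t)\in P_k(L_i)$, and pick witnesses $v_i\in L_i$ with $P_k(v_i)=Q_i(t)$ for $i=1,2$. The goal is to interleave $v_1$ and $v_2$ into some $w\in(\Sigma_1\cup\Sigma_2)^*$ satisfying $P_i(w)=v_i$ and $P_k(w)=t$. The shared letters of $v_1$ and $v_2$ lie in $\Sigma_1\cap\Sigma_2$; invoking $\Sigma_1\cap\Sigma_2\subseteq\Sigma_k$, the projection of $v_i$ onto $\Sigma_1\cap\Sigma_2$ factors through $P_k$ and equals the projection of $P_k(v_i)=Q_i(t)$ onto $\Sigma_1\cap\Sigma_2$, which is independent of $i$ and equals the projection of $t$ itself. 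So $v_1$ and $v_2$ agree on their common subsequence, and can be consistently shuffled in a way dictated by $t$: scan $t$ left to right and, between consecutive $\Sigma_k$-letters, splice in the maximal $(\Sigma_i\setminus\Sigma_k)$-block of each $v_i$. The resulting word satisfies $P_i(w)=v_i\in L_i$, so $w\in L_1\|L_2$, and $P_k(w)=t$.

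The main obstacle is making the shuffle construction rigorous; formally it is a bookkeeping argument best carried out by induction on $|t|$, tracking how many $(\Sigma_i\setminus\Sigma_k)$-letters of each $v_i$ have already been consumed before the next $\Sigma_k$-symbol is processed. The hypothesis $\Sigma_1\cap\Sigma_2\subseteq\Sigma_k$ enters precisely at the step where the restrictions of $v_1$ and $v_2$ to their common alphabet must coincide; without it the two witnesses can be incompatible and the conclusion of the lemma genuinely fails.
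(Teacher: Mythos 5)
Your argument is correct, but note that the paper does not actually prove this statement: it is quoted as a known fact from Wonham's notes \cite{Won12}, so there is no in-paper proof to match. Your proposal is a sound self-contained substitute. The easy inclusion is exactly right, and you correctly observe that it follows from the commutation $Q_i\circ P_k=R_i\circ P_1$ (resp.\ $P_2$) alone, with no use of $\Sigma_1\cap\Sigma_2\subseteq\Sigma_k$; the hypothesis is only needed for the converse, which indeed requires an interleaving construction because $P_k$ does not commute with intersection, i.e., $P_k(P_1^{-1}(L_1)\cap P_2^{-1}(L_2))\supseteq P_k(L_1)\parallel P_k(L_2)$ is the nontrivial direction. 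Your shuffle is the standard way to get it, and the sketched induction on $|t|$ (tracking how much of each $v_i$ has been emitted before the next $\Sigma_k$-letter of $t$) does go through: since $P_k(v_i)=Q_i(t)$, the $\Sigma_k$-letters of $v_i$ are consumed exactly at the occurrences of $\Sigma_i\cap\Sigma_k$-letters of $t$, and the spliced blocks lie in $(\Sigma_i\setminus\Sigma_k)^*$. One small refinement worth making when you write it out: the decisive use of $\Sigma_1\cap\Sigma_2\subseteq\Sigma_k$ is not so much the a priori ``agreement on the common subsequence'' (which your construction never needs to invoke separately, since all shared letters are placed by $t$ itself) as the fact that each $(\Sigma_i\setminus\Sigma_k)$-block contains no letter of $\Sigma_{3-i}$, so that $P_{3-i}$ erases it and the verification $P_i(w)=v_i$, $P_k(w)=t$ succeeds; your closing counterexample remark (the lemma genuinely fails without the hypothesis, e.g.\ $L_1=\{a\}$, $L_2=\{\eps\}$ over $\Sigma_1=\Sigma_2=\{a\}$, $\Sigma_k=\emptyset$) is accurate.
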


  \bigskip

  \begin{proofof}{Proposition~\ref{lem1}}
    Consider the setting of Problem~\ref{problem1}. If the specification $K$ is prefix-closed, then there exists a solution of Problem~\ref{problem1} if and only if the languages 
      $T_1 = \infCO(P_{1+k}(K), G_1\| G_k)$ and
      $T_2 = \infCO(P_{2+k}(K), G_2\| G_k)$
    satisfy equations~(\ref{sol1}).
  \end{proofof}
  
  \begin{proof}
    (If) Assume that $T_1\, \cap\, (P^{2+k}_k)^{-1}P_k(T_2) = P_{1+k}(K)$ and $T_2\, \cap\, (P^{2+k}_k)^{-1}P_k(T_1) = P_{2+k}(K)$. Since $T_1$ and $T_2$ are prefix-closed, hence nonblocking and nonconflicting, and controllable with respect to the corresponding plants, that is, they are supervisors for those plants, the implication follows from Lemma~\ref{thm:char}.
    
    (Only if) Assume that there is a solution of Problem~\ref{problem1}. By Lemma~\ref{thm:char}, there exist supervisors $S_1$ and $S_2$ satisfying equations (\ref{sol1}). Since $S_i$ is a supervisor, its language (for simplicity also denoted by $S_i$) is controllable with respect to $G_i\| G_k$. Moreover, $S_i$ contains $P_{i+k}(K)$, hence $T_i\subseteq S_i$, for $i=1,2$. Then,
    \begin{align*}
      P_{1+k}(K) & = P_{1+k}(K) \| P_k(P_{2+k}(K)) \subseteq T_1 \| P_k(T_2) \subseteq S_1\| P_k(S_2) = P_{1+k}(K).
    \end{align*}
    Thus, $T_1$ and $T_2$ satisfy equations (\ref{sol1}).
  \end{proof}

  \bigskip
  
  \begin{proofof}{Theorem~\ref{thmMain}}
    Consider the setting of Problem~\ref{problem1}. There exists a solution for specification $K$ if and only if there exists a solution for its prefix-closure $\overline{K}$.
  \end{proofof}
  
  To prove this theorem, we first show the simple direction.
  \begin{lemma}
    Consider the setting of Problem~\ref{problem1}. If there exists a solution for specification $K$, then there exists a solution for its prefix-closure $\overline{K}$.
  \end{lemma}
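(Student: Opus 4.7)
The plan is to take the given supervisors $S_1,S_2$ for $K$ and obtain supervisors $\tilde S_1,\tilde S_2$ for $\overline K$ by a trivial modification of their marking. Specifically, I would let $\tilde S_i$ be obtained from $S_i$ by declaring every reachable state of the supervisor generator to be marked, while leaving the control action (the set of events disabled at each state) unchanged. Since the transition structure is preserved, one obtains $L(\tilde S_i/[G_i\| G_k]) = L(S_i/[G_i\| G_k])$, and the enlarged marking yields $L_m(\tilde S_i/[G_i\| G_k]) = L(\tilde S_i/[G_i\| G_k])$.

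With this construction, the two equations required for the specification $\overline K$ are essentially immediate. The generated-language equation $L(\tilde S_1/[G_1\| G_k]) \parallel L(\tilde S_2/[G_2\| G_k]) = \overline{\overline K} = \overline K$ is exactly the second equation assumed for the pair $(S_1,S_2)$. The marked-language equation $L_m(\tilde S_1/[G_1\| G_k]) \parallel L_m(\tilde S_2/[G_2\| G_k]) = \overline K$ then follows by substituting $L_m(\tilde S_i/[G_i\| G_k]) = L(\tilde S_i/[G_i\| G_k])$ into the same identity. Each $\tilde S_i$ is trivially nonblocking since its marked and generated closed-loop languages coincide, and nonconflictingness holds because both languages are prefix-closed and their parallel composition equals the prefix-closed language $\overline K$.

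The only delicate point is a bookkeeping one: one must verify that enlarging the supervisor marking to all reachable states is consistent with the plant's own marking in $G_i\| G_k$, so that the identity $L_m(\tilde S_i/[G_i\| G_k]) = L(\tilde S_i/[G_i\| G_k])$ is genuinely achieved after composition. Here I would invoke nonblockingness of the original $S_i$ together with the hypothesis $L(S_1/[G_1\| G_k]) \parallel L(S_2/[G_2\| G_k]) = \overline K$: every string in $L(S_i/[G_i\| G_k])$ is a prefix of a marked string of the closed loop $S_i/[G_i\| G_k]$, hence is admissible with respect to the plant marking once the supervisor is declared to mark everything. With this verification, $(\tilde S_1,\tilde S_2)$ is a solution for $\overline K$ in the sense of Problem~\ref{problem1} and the lemma follows.
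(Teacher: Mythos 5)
Your construction is exactly the paper's: marking every reachable state of $S_i$ is the same as passing to $\overline{S_i}$, and your verification---reading the generated-language equation $L(S_1/[G_1\| G_k])\parallel L(S_2/[G_2\| G_k])=\overline{K}$ directly off the definition of a solution of Problem~\ref{problem1}---is equivalent to the paper's check of the equations of Lemma~\ref{thm:char} via nonblockingness and nonconflictingness, so the proof is correct and takes essentially the same route. One caveat on your final paragraph: the ``delicate point'' is a non-issue under the paper's convention $L_m(S/G)=L_m(S)\parallel L_m(G)=L_m(S)$, i.e.\ the supervisor's marking determines the closed-loop marking (this is precisely how the paper treats $\overline{S_1},\overline{S_2}$ here and the prefix-closed supervisors $T_i$ in Proposition~\ref{lem1}); moreover, the justification you give would not work if it were needed, since a string of $L(S_i/[G_i\| G_k])$ being a prefix of a marked closed-loop string does not imply that it reaches a marked state of $G_i\| G_k$. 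Since that intersection with the plant marking is not part of the framework, simply drop that paragraph and the argument stands.
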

  \begin{proof}
    Let $S_1$ and $S_2$ denote the solution for specification $K$. If $L_m(S_1/(G_1\| G_k)) \| P_k(S_2) = P_{1+k}(K)$, then 
    \begin{align*}
      \overline{P_{1+k}(K)} 
      & = \overline{L_m(S_1/(G_1\| G_k)) \| P_k(S_2)} \\
      & = \overline{L_m(S_1/(G_1\| G_k))} \| \overline{P_k(S_2)} \\
      & = L(S_1/(G_1\| G_k)) \| \overline{P_k(S_2)}
    \end{align*}
    by nonconflictingness and nonblockingness of the supervisors. Similarly for the other equation. Thus, $\overline{S_1}$ and $\overline{S_2}$ form a solution for $\overline{K}$.
  \end{proof}

  Thus, it remains to show that the existence of a solution for the prefix-closure of $K$ implies the existence of a solution for $K$ itself.
  \begin{lemma}
    Consider the setting of Problem~\ref{problem1}. If there exists a solution for the prefix-closure $\overline{K}$ of specification $K$, then there exists a solution for specification $K$.
  \end{lemma}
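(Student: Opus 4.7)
The plan is to leverage the given solution for $\overline{K}$ to produce supervisors for $K$ that share the same control action but have carefully enlarged markings. Let $\tilde{S}_1, \tilde{S}_2$ be a solution for $\overline{K}$. Since $\overline{K}$ is prefix-closed, we may take each $\tilde{S}_i$ to be a prefix-closed supervisor, so that $L_m(\tilde{S}_i/(G_i\|G_k)) = L(\tilde{S}_i/(G_i\|G_k))$ and
$L(\tilde{S}_1/(G_1\|G_k)) \| L(\tilde{S}_2/(G_2\|G_k)) = \overline{K}$.

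I would define $S_i$ as a generator sharing the same transition structure as $\tilde{S}_i$, so that $L(S_i/(G_i\|G_k)) = L(\tilde{S}_i/(G_i\|G_k))$ and $S_i$ inherits validity as a supervisor from controllability of $L(\tilde{S}_i/(G_i\|G_k))$ with respect to $L(G_i\|G_k)$, but with marking
\[
  L_m(S_i/(G_i\|G_k)) = P_{i+k}(K) \cup \bigl(L(\tilde{S}_i/(G_i\|G_k)) \setminus \overline{P_{i+k}(K)}\bigr).
\]
The first summand encodes the intended marking for the specification $K$; the second summand adds just enough extra marked strings so that every reachable string in $\tilde{S}_i$ extends to (or already is) a marked string. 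For nonblockingness, any $w\in L(\tilde{S}_i/(G_i\|G_k))$ either belongs to $\overline{P_{i+k}(K)}$---in which case it is a prefix of some string in $P_{i+k}(K) \subseteq L_m(S_i/(G_i\|G_k))$---or it lies in $L_m(S_i/(G_i\|G_k))$ directly, giving $\overline{L_m(S_i/(G_i\|G_k))} = L(S_i/(G_i\|G_k))$. The generated closed-loop inherits directly: $L(S_1/(G_1\|G_k)) \| L(S_2/(G_2\|G_k)) = \overline{K}$.

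The critical step I expect to be the main obstacle is showing $L_m(S_1/(G_1\|G_k)) \| L_m(S_2/(G_2\|G_k)) = K$, as it must be verified that the extra markings added to repair nonblockingness do not enlarge the composed marked behavior beyond $K$. Inclusion $\supseteq$ is immediate from $P_{i+k}(K) \subseteq L_m(S_i/(G_i\|G_k))$ combined with conditional decomposability of $K$. For $\subseteq$, any $v$ in the parallel composition satisfies $P_{i+k}(v) \in L_m(S_i/(G_i\|G_k)) \subseteq L(\tilde{S}_i/(G_i\|G_k))$, hence $v \in L(\tilde{S}_1/(G_1\|G_k)) \| L(\tilde{S}_2/(G_2\|G_k)) = \overline{K}$, so $P_{i+k}(v) \in P_{i+k}(\overline{K}) = \overline{P_{i+k}(K)}$. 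By construction, however, the extra marked strings lie outside $\overline{P_{i+k}(K)}$, so $L_m(S_i/(G_i\|G_k)) \cap \overline{P_{i+k}(K)} = P_{i+k}(K)$; therefore $P_{i+k}(v) \in P_{i+k}(K)$ for $i=1,2$, and $v \in P_{1+k}(K) \| P_{2+k}(K) = K$ by decomposability. Nonconflictingness then follows because $\overline{L_m(S_1/(G_1\|G_k)) \| L_m(S_2/(G_2\|G_k))} = \overline{K}$ and $\overline{L_m(S_i/(G_i\|G_k))} = L(\tilde{S}_i/(G_i\|G_k))$ whose parallel composition is also $\overline{K}$. Hence $S_1$ and $S_2$ satisfy the conditions of Lemma~\ref{thm:char} and form a solution for $K$; the entire argument hinges on the identity $L(\tilde{S}_1/(G_1\|G_k)) \| L(\tilde{S}_2/(G_2\|G_k)) = \overline{K}$, which confines synchronizable strings to $\overline{P_{i+k}(K)}$ and thereby neutralizes the extra markings in the composition.
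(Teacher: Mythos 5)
Your construction is exactly the paper's: starting from a solution for $\overline{K}$, you re-mark each supervisor with $P_{i+k}(K)\cup\bigl(L(\tilde S_i/(G_i\|G_k))\setminus\overline{P_{i+k}(K)}\bigr)$, argue nonblockingness and inherited controllability, and then check that the added marks cannot survive the synchronous composition because every synchronizable string stays inside $\overline{K}$. The only (harmless) differences are that you verify the global equations of Problem~\ref{problem1} directly instead of the per-supervisor equations of Lemma~\ref{thm:char} as the paper does, and your extra assumption that the $\tilde S_i$ are prefix-closed is unnecessary since $L(\tilde S_1/(G_1\|G_k))\|L(\tilde S_2/(G_2\|G_k))=\overline{K}$ already holds for any solution for $\overline{K}$.
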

  \begin{proof}
    Let $G_{S_1}$ and $G_{S_2}$ denote the generators of the solution for the prefix-closed language $\overline{K}$, that is,
    \begin{align*}
      L(G_{S_1}/(G_1\| G_k)) &\| P_k(L(G_{S_2})) = \overline{P_{1+k}(K)}\\
      &\text{ and } \\
      L(G_{S_2}/(G_2\| G_k)) &\| P_k(L(G_{S_1})) = \overline{P_{2+k}(K)}\,.
    \end{align*}
    For $i=1,2$, we now define the marking on $G_{S_1}$ and $G_{S_2}$ as follows:
    \[
      L_m(G_{S_i}) = P_{i+k}(K) \cup \left( L(G_{S_i}) \setminus \overline{P_{i+k}(K)} \right)\,.
    \]
    Note that $L_m(G_{S_i})\subseteq L(G_{S_i})$, since $P_{i+k}(K)\subseteq L(G_{S_i})$. 
    
    We show that $G_{S_i}$ is nonblocking, that is, $L(G_{S_i})\subseteq \overline{L_m(G_{S_i})}$. To this end, let $w\in L(G_{S_i})$. If $w\notin \overline{P_{i+k}(K)}$, then $w\in L_m(G_{S_i})$ by definition. If $w\in \overline{P_{i+k}(K)}$, then there exists a word $v$ such that $wv\in P_{i+k}(K)\subseteq L_m(G_{S_i})$. Hence $w\in \overline{L_m(G_{S_i})}$.
    Thus, $\overline{L_m(G_{S_i})}=L(G_{S_i})$. 
    
    Moreover, $L_m(G_{S_i})$ is controllable with respect to the plant $G_i \| G_k$, because, by assumption, $L(G_{S_i})$ is. Thus, $G_{S_i}$ is a marking nonblocking supervisor for $G_i\| G_k$.
    Furthermore, we have that
    \begin{align*}
      P_{1+k}(K) 
       & \subseteq L_m(G_{S_1}/(G_1\| G_k)) \| P_k(L_m(G_{S_2}))\\
       & \subseteq L(G_{S_1}/(G_1\| G_k)) \| P_k(L(G_{S_2}))\\
       & = \overline{P_{1+k}(K)}
    \end{align*}
    
    To show that \[L_m(G_{S_1}/(G_1\| G_k)) \| P_k(L_m(G_{S_2}))\subseteq P_{1+k}(K)\] note that if there exists $w\in \overline{P_{1+k}(K)} \setminus P_{1+k}(K)$, then $w$ does not belong to $L_m(G_{S_1})$ by definition. Thus, we have shown that $L_m(G_{S_1}/(G_1\| G_k)) \| P_k(L_m(G_{S_2})) = P_{1+k}(K)$. The case for $G_{S_2}$ is analogous.

    Finally, 
    \begin{align*}
      \overline{K}
      & = \overline{P_{1+k}(K) \| P_{2+k}(K)} \\
      & = \overline{L_m(G_{S_1}/(G_1\| G_k)) \| P_k(L_m(G_{S_2})) 
          \| L_m(G_{S_2}/(G_2\| G_k)) \| P_k(L_m(G_{S_1}))}\\
      & = \overline{L_m(G_{S_1}/(G_1\| G_k)) \| L_m(G_{S_2}/(G_2\| G_k))}\\
      & \subseteq \overline{L_m(G_{S_1}/(G_1\| G_k))} \| \overline{L_m(G_{S_2}/(G_2\| G_k))}\\
      & = L(G_{S_1}/(G_1\| G_k)) \| L(G_{S_2}/(G_2\| G_k))\\
      & = \overline{P_{1+k}(K)} \| \overline{P_{2+k}(K)} = \overline{K}
    \end{align*}
    which shows that the supervisors are also nonconflicting.
  \end{proof}

\end{document}